\newtheorem{theorem}{Theorem}
\newtheorem{lemma}[theorem]{Lemma}
 \newtheorem{conjecture}[theorem]{Conjecture}
\newtheorem{example}{Example}
\begin{document}
\title[On varieties defined by large sets of quadrics]{\sc On varieties defined by large sets of quadrics and their application to error-correcting codes}
\author{Simeon Ball and Valentina Pepe}
\date{17 March 2020.\\ 2010 {\it Mathematics Subject Classification.} 51E21, 94B05, 05B25. \\ The authors acknowledges the support of the project MTM2017-82166-P of the Spanish {\em Ministerio de Econom\'ia y Competitividad.}}
\maketitle

\begin{abstract}
Let $U$ be a $({ k-1 \choose 2}-1)$-dimensional subspace of quadratic forms defined on $\mathrm{PG}(k-1,{\mathbb F})$ with the property that $U$ does not contain any reducible quadratic form. Let $V(U)$ be the points of $\mathrm{PG}(k-1,{\mathbb F})$ which are zeros of all quadratic forms in $U$. We will prove that if there is a group $G$ which fixes $U$ and no line of $\mathrm{PG}(k-1,{\mathbb F})$ and $V(U)$ spans $\mathrm{PG}(k-1,{\mathbb F})$ then any hyperplane of $\mathrm{PG}(k-1,{\mathbb F})$ is incident with at most $k$ points of $V(U)$.
If ${\mathbb F}$ is a finite field then the linear code generated by the matrix whose columns are the points of $V(U)$ is a $k$-dimensional linear code of length $|V(U)|$ and minimum distance at least $|V(U)|-k$. A linear code with these parameters is an MDS code or an almost MDS code.  We will construct examples of such subspaces $U$ and groups $G$, which include the normal rational curve, the elliptic curve, Glynn's arc from \cite{Glynn1986} and other examples found by computer search. We conjecture that the projection of $V(U)$ from any $k-4$ points is contained in the intersection of two quadrics, the common zeros of two linearly independent quadratic forms. This would be a strengthening of a classical theorem of Fano, which itself is an extension of a theorem of Castelnuovo, for which we include a proof using only linear algebra.
\end{abstract}

\section{Introduction}

Let $\mathrm{PG}(k-1,{\mathbb F})$ denote the $(k-1)$-dimensional projective space over an arbitrary field ${\mathbb F}$. An {\em arc} is a subset $S$ of points of $\mathrm{PG}(k-1,{\mathbb F})$ with the property that any hyperplane is incident with at most $k-1$ points of $S$. A {\em track} is a subset $S$ of points of $\mathrm{PG}(k-1,{\mathbb F})$ with the property that any hyperplane is incident with at most $k$ points of $S$ and some hyperplane is incident with exactly $k$ points of $S$. Tracks were first defined by de Boer in \cite{deBoer1997}. Let ${\mathbb F}_q$ denote the finite field with $q$ elements. In this article, we will be interested in arcs and tracks in $\mathrm{PG}(k-1,{\mathbb F}_q)$, which give rise to $k$-dimensional linear codes of length $|S|$ and minimum distance $|S|-k+1$ and $|S|-k$ respectively, also known as linear MDS (maximum distance separable) and linear AMDS codes respectively.

 In $\mathrm{PG}(k-1,{\mathbb F}_q)$, which we from now on denote by $\mathrm{PG}(k-1,q)$, the classical example of a large arc is the normal rational curve. This arc has size $q+1$ and is larger than taking a basis plus a point, which is an arc of size $k+1$, for $k \leqslant q-1$. The classical example of a large track comes from an elliptic curve, see Section~\ref{ecurve}. This track has size at most $q+\lceil 2\sqrt{q}\rceil +1$. These sizes should be compared to the trivial upper bounds in each case. By considering the hyperplanes through a subset of $k-2$ points of an arc or track, one quickly deduces that an arc has size at most $q+k-1$ and a track has size at most $2q+k$.

In this article, we shall be interested in arcs and tracks which are contained in the common zeros of a large number of linearly independent quadratic forms.

The following theorem is from Glynn \cite[Theorem 3.1]{Glynn1994}.

\begin{theorem} \label{glynnsthm}
Let $U$ be a subspace of quadratic forms defined on ${\mathbb F}^k$ with the property that $U$ does not contain any reducible quadratic forms. Let $V(U)$ be the points of $\mathrm{PG}(k-1,{\mathbb F})$ which are zeros of all quadratic forms in $U$. If $U$ has dimension ${k-1 \choose 2}$ and $V(U)$ spans the space then $V(U)$ is an arc of $\mathrm{PG}(k-1,{\mathbb F})$.
\end{theorem}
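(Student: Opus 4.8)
The plan is to argue by contradiction at the level of configurations. Suppose some hyperplane $H$ of $\mathrm{PG}(k-1,{\mathbb F})$ meets $V(U)$ in at least $k$ points; I will find $k+1$ points of $V(U)$ on which the evaluation of quadratic forms has rank at most $k$, producing a non-trivial linear relation that the hypotheses then force to vanish. The arithmetic that makes everything fit is that the space of all quadratic forms on ${\mathbb F}^k$ has dimension $\binom{k+1}{2}$, so $U$ has codimension $\binom{k+1}{2}-\binom{k-1}{2}=2k-1$ there; this is exactly the slack available, and the argument works over any field.

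First I would reduce to a mildly degenerate configuration. If $V(U)$ is not an arc it contains $k$ linearly dependent points, hence a minimal (with respect to inclusion) linearly dependent subset $\{p_1,\dots,p_s\}$; since the $p_i$ are distinct, $s\ge 3$ and these points are linearly independent after removing any one of them, so they span a $\mathrm{PG}(s-2,{\mathbb F})$. Because $V(U)$ spans $\mathrm{PG}(k-1,{\mathbb F})$, I can enlarge $\{p_1,\dots,p_{s-1}\}$ by greedily chosen points $q_1,\dots,q_{k-s}$ of $V(U)$ so that $H:=\langle p_1,\dots,p_{s-1},q_1,\dots,q_{k-s}\rangle$ is a hyperplane. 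Relabelling, I obtain $k$ distinct points $p_1,\dots,p_k$ of $V(U)$ for which $p_1,\dots,p_{k-1}$ are linearly independent and span $H$, while $p_k\in H$; writing $\widetilde p_k=\sum_{i=1}^{k-1}c_i\widetilde p_i$ for representative vectors, at least two of the $c_i$ are non-zero, since otherwise $p_k$ would coincide with some $p_i$. Finally, as $V(U)$ spans the space, pick $p_{k+1}\in V(U)\setminus H$, so that $\widetilde p_1,\dots,\widetilde p_{k-1},\widetilde p_{k+1}$ is a basis of ${\mathbb F}^k$.

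Next comes the dimension count. Let $\ell$ be a linear form vanishing exactly on $H$. For any linear form $m$ with $m(\widetilde p_{k+1})=0$, the quadratic form $\ell m$ vanishes at all of $p_1,\dots,p_{k+1}$ and is a product of two non-proportional linear forms (because $\ell(\widetilde p_{k+1})\ne0$), hence is reducible; so the $(k-1)$-dimensional space of forms $\ell m$ meets $U$ only in $0$. Together with $U$ this gives a subspace of dimension at least $\binom{k-1}{2}+(k-1)$ consisting of quadratic forms that vanish at $p_1,\dots,p_{k+1}$. Consequently the evaluation map $Q\mapsto(Q(\widetilde p_1),\dots,Q(\widetilde p_{k+1}))$ has rank at most $\binom{k+1}{2}-\binom{k-1}{2}-(k-1)=k$, hence is not onto, so there are scalars $\lambda_1,\dots,\lambda_{k+1}$, not all zero, with $\sum_i\lambda_i g(\widetilde p_i)h(\widetilde p_i)=0$ for all linear forms $g,h$ (apply the relation $\sum_i\lambda_iQ(\widetilde p_i)=0$ to $Q=gh$).

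Finally I would extract the contradiction by evaluating this identity on the basis of linear forms dual to $\widetilde p_1,\dots,\widetilde p_{k-1},\widetilde p_{k+1}$ and using $\widetilde p_k=\sum_i c_i\widetilde p_i$: taking $g=h$ dual to $\widetilde p_{k+1}$ gives $\lambda_{k+1}=0$; taking $g$ and $h$ dual to $\widetilde p_i$ and $\widetilde p_j$ with $i\ne j$ in $\{1,\dots,k-1\}$ gives $\lambda_kc_ic_j=0$; and taking $g=h$ dual to $\widetilde p_i$ gives $\lambda_i+\lambda_kc_i^2=0$. If $\lambda_k=0$ then every $\lambda_i=0$, contradicting non-triviality; if $\lambda_k\ne0$ then $c_ic_j=0$ for all $i\ne j$, so at most one $c_i$ is non-zero, contradicting that at least two of the $c_i$ are non-zero. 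Hence no hyperplane meets $V(U)$ in $k$ points and $V(U)$ is an arc. I expect the first reduction to be the real obstacle: an arbitrary $k$-point subset of $V(U)$ inside a hyperplane can be far more degenerate than one with a linearly independent $(k-1)$-subset, and the linear-algebra step only bites in the latter case --- it is exactly here, and in supplying $p_{k+1}$ off $H$, that the hypothesis that $V(U)$ spans $\mathrm{PG}(k-1,{\mathbb F})$ is needed; note also that lowering $\dim U$ by one destroys the crucial inequality (the rank bound becomes $k+1$), which is why the main theorem of this paper must impose an extra symmetry hypothesis.
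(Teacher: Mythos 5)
Your proof is correct. Note first that the paper does not actually prove Theorem~\ref{glynnsthm}; it is cited from Glynn's 1994 paper, so there is no in-text argument to compare against. What you give is a complete, self-contained linear-algebra proof, and all the steps check out: the reduction to a minimal dependent subset with $s\geqslant 3$ (guaranteeing at least two nonzero $c_i$), the greedy extension of $\{p_1,\dots,p_{s-1}\}$ inside $V(U)$ to a hyperplane-spanning set (legitimate since $V(U)$ spans), the choice of $p_{k+1}$ off $H$ (again using the spanning hypothesis), the observation that the $(k-1)$-dimensional space $\{\ell m : m(\widetilde p_{k+1})=0\}$ consists of reducible forms vanishing at all $k+1$ points and hence meets $U$ trivially, the resulting rank bound $\binom{k+1}{2}-\binom{k-1}{2}-(k-1)=k$ on the evaluation map to $\mathbb{F}^{k+1}$, and the final evaluation on products of dual-basis linear forms yielding $\lambda_{k+1}=0$, $\lambda_i=-\lambda_k c_i^2$, and $\lambda_k c_i c_j=0$, which together force a contradiction in either case for $\lambda_k$. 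Your argument is close in spirit to the dimension-counting used in the paper's own proof of Theorem~\ref{groupthm} (where the count is done by restricting quadratic forms to the hyperplane $\pi$ and comparing with $\dim U$ there), but you work in the ambient space with one auxiliary point off the hyperplane rather than restricting; both routes exploit that $U$ avoids the large space of reducible forms attached to a hyperplane. Your closing observation that decrementing $\dim U$ by one raises the rank bound to $k+1$ and kills the contradiction is exactly the right explanation for why Theorems~\ref{mainthm} and~\ref{groupthm} require extra structure.
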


Glynn conjectures in Section 5 of \cite{Glynn1994} that if we include the additional hypothesis that $|V(U)|=q+1$ then $V(U)$ is a normal rational curve. In Theorem~\ref{castelnuovo} we will prove this conjecture under the hypothesis that $2k \leqslant q$. Glynn suggests that the hypothesis $2k \leqslant q+2$ may be necessary.

If the field is a finite field then Theorem~\ref{glynnsthm} implies that the linear code $C$ generated by the matrix whose columns are the points of $V(U)$ is a $k$-dimensional linear code of length $|V(U)|$ and minimum distance at least $|V(U)|-k+1$. In other words, $C$ is a linear maximum distance separable (MDS) code.

The following theorem is from \cite{BJ2018}.

\begin{theorem} \label{mainthm}
Let $U$ be a subspace of quadratic forms defined on ${\mathbb F}^k$ with the property that $U$ does not contain any reducible quadratic form. Let $V(U)$ be the points of $\mathrm{PG}(k-1,{\mathbb F})$ which are zeros of all quadratic forms in $U$. If $U$ has dimension ${k-1 \choose 2}-1$ and $V(U)$ spans the space then $V(U)$ is either an arc, a track or contains a line.
\end{theorem}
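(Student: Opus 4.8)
The plan is to establish the contrapositive: if $V(U)$ contains no line, then every hyperplane of $\mathrm{PG}(k-1,{\mathbb F})$ meets $V(U)$ in at most $k$ points, so that $V(U)$ is an arc or a track. The first step is the elementary observation that ``$V(U)$ contains a line'' is the same as ``$V(U)$ contains three distinct collinear points'': if $P_1,P_2,P_3$ lie on a line $\ell$ and all belong to $V(U)$, then for each $Q\in U$ the restriction of $Q$ to $\ell$ is a binary quadratic form with three zeros, hence identically zero, so $\ell\subseteq V(Q)$; as this holds for every $Q\in U$, we get $\ell\subseteq V(U)$. From now on I assume no three points of $V(U)$ are collinear and I aim for a contradiction from the existence of a hyperplane $H$ with $|V(U)\cap H|\geqslant k+1$.

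I would first reduce to the case in which the points of $V(U)$ lying in $H$ actually span $H$. If $W:=\langle V(U)\cap H\rangle$ is a proper subspace of $H$, then, using that $V(U)$ spans $\mathrm{PG}(k-1,{\mathbb F})$, I adjoin to $W$ points of $V(U)$ not in the current span, one at a time, until the span becomes a hyperplane $H^{\ast}$; then $V(U)\cap H^{\ast}$ contains $V(U)\cap W$ together with the adjoined points, hence spans $H^{\ast}$ and has at least $k+1$ elements, still with no three collinear. Replacing $H$ by $H^{\ast}$, I may assume that $A:=V(U)\cap H$ spans $H$ and $|A|\geqslant k+1$.

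The main algebraic input is the hypothesis that $U$ contains no reducible form. The restriction map from $U$ to the space of quadratic forms on $H$ has kernel equal to the set of forms in $U$ vanishing identically on $H$; each such form is the product of a linear form defining $H$ with a linear form, hence reducible, so the kernel is trivial. Consequently the $\binom{k}{2}$-dimensional space of quadratic forms on $H\cong\mathrm{PG}(k-2,{\mathbb F})$ contains a subspace of dimension $\binom{k-1}{2}-1$ all of whose members vanish on $A$; equivalently, $A$ imposes at most $\binom{k}{2}-\binom{k-1}{2}+1=k$ linear conditions on quadratic forms on $H$. Now I choose $k-1$ points of $A$ forming a basis of $H$; they impose exactly $k-1$ (manifestly independent) conditions, so the remaining at least two points of $A$ impose at most one further condition between them. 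Each remaining point, not being a basis vector, has at least two nonzero coordinates, hence imposes a nonzero condition on the forms vanishing on the chosen basis; since at least two such nonzero conditions together have rank at most one, two of these points $P,P'$ impose proportional conditions, so the vectors of pairwise products of the coordinates of $P$ and of $P'$ are proportional.

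It then remains to derive a contradiction from this proportionality. If $P$ (equivalently $P'$) has exactly two nonzero coordinates, then $P$ lies on the line joining the two corresponding basis points, producing three collinear points of $V(U)$; if $P$ has at least three nonzero coordinates, then comparing products of pairs of coordinates shows that the coordinatewise ratios of $P$ and $P'$ are all equal, forcing $P=P'$. Either alternative contradicts a standing assumption, which finishes the argument. I expect the delicate points to be the reduction to the spanning case and, within it, this last coordinate computation; the remainder is bookkeeping with dimensions of spaces of quadratic forms. One can note in passing that running the same argument with $\dim U=\binom{k-1}{2}$ forces the ``remaining'' points to impose zero further conditions, an immediate contradiction, so this scheme also reproves Theorem~\ref{glynnsthm}.
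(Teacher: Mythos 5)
The paper does not prove Theorem~\ref{mainthm}; it quotes it from the reference \cite{BJ2018}, so there is no in-paper proof to compare against. I can therefore only evaluate your argument on its own terms, and it appears to be correct. Let me record where the weight of the argument sits, since some steps you flagged as delicate in fact close up cleanly.

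The reduction to a spanning hyperplane $H^{\ast}$ is sound: since each adjoined point strictly increases the dimension of the current span, you can stop at exactly a hyperplane, and $V(U)\cap H^{\ast}$ contains both $V(U)\cap H$ (which lies in $W\subseteq H^{\ast}$) and the adjoined points. The injectivity of the restriction map to forms on $H$ uses that a quadratic form vanishing on every $\mathbb{F}$-point of $H$ vanishes identically as a polynomial on $H$; this holds over any field because a nonzero binary quadratic form has at most two projective zeros, so one may test on $e_i$ and $e_i+e_j$, and then the hyperplane's linear form divides it. Your dimension count, $\binom{k}{2}-\bigl(\binom{k-1}{2}-1\bigr)=k$, is exact. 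After choosing $k-1$ basis points of $A$ (which impose $k-1$ visibly independent conditions $a_{ii}=0$), the functionals on $Q=\{\,a_{ii}=0\ \forall i\,\}$ coming from the remaining $\geqslant 2$ points of $A$ are each nonzero (since a remaining point, being a distinct projective point from every basis point, has at least two nonzero coordinates) and span a space of dimension $\leqslant 1$, hence are proportional. Proportionality of $(p_ip_j)_{i<j}$ and $(p'_ip'_j)_{i<j}$ forces the two points to have the same support; support of size $2$ already gives three collinear points of $V(U)$ with two basis vectors, while support of size $\geqslant 3$ forces $P=P'$ projectively, and both contradict standing assumptions. Your side remark about Theorem~\ref{glynnsthm} is also correct as stated: with $\dim U=\binom{k-1}{2}$, a single remaining point already yields a nonzero condition where none is allowed, so the reduction and counting alone give the contradiction, with no need of the three-collinear-points dichotomy. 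In spirit this dimension-counting-against-reducible-forms argument matches what the paper actually does prove, namely Theorem~\ref{groupthm}, so the style is consistent with the source even though the source delegates this particular proof.
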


The following example indicates that $V(U)$ can contain a line. 

\begin{example}
Let $k=4$ and
$$
U=\langle X_1X_3+X_2X_4,  X_2X_3+X_1X_4+X_3X_4 \rangle.
$$
Then $U$ contains no reducible quadratic form, $V(U)$ spans the space and contains the line spanned by $(1,0,0,0)$ and $(0,1,0,0)$. 

The set $V(U)$ also contains the normal rational curve
$$
\{(-t,t^2,t-t^3,1-t^2) \ | \ t \in {\mathbb F}_q \} \cup \{(0,0,1,0)\}.
$$
The intersection of $V(U)$ with the quadric $V(X_1^2+X_1X_3-X_2^2)$ is the normal rational curve.
\end{example}

Supposing that $V(U)$ contains no line, if the field is a finite field then Theorem~\ref{mainthm} implies that the linear code $C$ generated by the matrix whose columns are the points of $V(U)$ is a $k$-dimensional linear code of length $|V(U)|$ and minimum distance at least $|V(U)|-k$.

Theorem~\ref{groupthm} adds an additional hypothesis to the previous theorem which rules out the possibility that $V(U)$ contains a line. In Section~\ref{cyclicexamples}, we shall detail many examples in which Theorem~\ref{groupthm} applies. These examples include the famous Glynn arc from \cite{Glynn1986}.
This example has parameters $k=5$ and $q=9$. The variety $V(U)$ is a track of size $q+2$ and contains an arc of size $q+1$ which is not a normal rational curve. The subspace of quadratic forms has a large symmetry group which contains a cyclic subgroup of size $k$ which fixes no line of $\mathrm{PG}(4,9)$.

We say that a set $V$ of points of $\mathrm{PG}(k-1,{\mathbb F})$ defines $r$ conditions on the subspace of quadratic forms if the co-dimension of the subspace of quadratic forms in the space of quadratic forms which are zero on $V$ is $r$.

\begin{theorem} \label{groupthm}
Let $U$ be a subspace of quadratic forms defined on $\mathrm{PG}(k-1,{\mathbb F})$ with the property that $U$ does not contain reducible quadratic forms. Let $V(U)$ be the points of $\mathrm{PG}(k-1,{\mathbb F})$ which are zeros of all quadratic forms in $U$. Suppose that there is a subgroup $G$ of $\mathrm{PGL}(k,{\mathbb F})$ which fixes $U$ but which fixes no line of $\mathrm{PG}(k-1,{\mathbb F})$. If $U$ has dimension ${k-1 \choose 2}-1$ and $V(U)$ spans the space then $V(U)$ is either an arc or a track.
\end{theorem}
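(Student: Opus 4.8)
The plan is to argue by contradiction, the group hypothesis being designed precisely to exclude the third alternative in Theorem~\ref{mainthm}. Suppose $V(U)$ is neither an arc nor a track. Then Theorem~\ref{mainthm} says $V(U)$ contains a line, and it suffices to prove that $G$ must fix some line of $\mathrm{PG}(k-1,\mathbb{F})$, contrary to hypothesis. Write $\mathcal{L}$ for the non-empty set of lines contained in $V(U)$; since $G$ fixes $U$ it fixes $V(U)$ setwise, hence permutes $\mathcal{L}$ and fixes its span $\langle\mathcal{L}\rangle$. As fixing a line of $\mathrm{PG}(k-1,\mathbb{F})$ is the same as having a $2$-dimensional $G$-invariant subspace of $\mathbb{F}^k$, the aim is to produce such a subspace out of $\mathcal{L}$.

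The heart of the matter is a structural analysis of $\mathcal{L}$ using the remaining hypotheses, namely that $U$ contains no reducible quadratic form and that $\dim U={k-1\choose 2}-1$. One establishes, in order: (i) $V(U)$ contains no plane --- in suitable coordinates a form vanishing on a plane $\pi\subseteq V(U)$ has the shape $X_1M_1+X_2M_2+X_3M_3+Q'$ with $M_1,M_2,M_3$ and $Q'$ in the remaining variables, and there is no subspace of such forms of dimension ${k-1\choose 2}-1$ with no reducible member (for $k=4$ such a form literally factors; for $k\geqslant 5$ a count against the reducible forms vanishing on $\pi$ forces a reducible member of $U$); (ii) two lines of $\mathcal{L}$ cannot meet --- if $\ell_1,\ell_2\in\mathcal{L}$ met they would span a plane $\pi$ on which every $Q\in U$ restricts to a conic containing $\ell_1\cup\ell_2$, hence to $0$ or to the reducible conic $\ell_1\ell_2$, so $\{Q\in U:V(Q)\supseteq\pi\}$ has codimension at most $1$ in $U$, and feeding this into the normal form of (i) again yields a plane inside $V(U)$ or a reducible member of $U$; (iii) $\mathcal{L}$ has at most two elements --- projecting $V(U)$ onto a $\mathrm{PG}(3,\mathbb{F})$ from a suitable subspace of projective dimension $k-5$, one checks by the same kind of dimension count that at least two independent quadrics of $U$ descend (for $k\geqslant 4$ the space of quadrics of $U$ vanishing on the centre has dimension at least $2k-6$), whereas three pairwise skew lines of $\mathrm{PG}(3,\mathbb{F})$ lie on a space of quadrics of dimension only $1$; a reducible quadric downstairs would also pull back to a reducible member of $U$. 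Hence $\langle\mathcal{L}\rangle$ is either a single line or a $\mathrm{PG}(3,\mathbb{F})$ carrying exactly two skew lines of $V(U)$.

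If $\mathcal{L}=\{\ell\}$, then $\ell$ is $G$-invariant and we are done. Otherwise $\mathcal{L}=\{\ell_1,\ell_2\}$ with $\ell_1,\ell_2$ skew, so $\langle\ell_1,\ell_2\rangle=\mathrm{PG}(W)$ for a $G$-invariant $4$-dimensional subspace $W=W_1\oplus W_2$, and $G$ either fixes both $\ell_i$ (and we are done) or contains an element swapping them. In the swap case one works inside $\mathrm{PG}(W)\cong\mathrm{PG}(3,\mathbb{F})$: the pencil of quadrics on $W$ vanishing on $V(U)\cap\mathrm{PG}(W)$ has no reducible member and has $\ell_1\cup\ell_2$ as its zero set, and writing it in $W_1\oplus W_2$ block form exhibits it as an anisotropic binary quadratic form, equivalently as a quadratic field extension of $\mathbb{F}$ realised inside $2\times 2$ matrices; the stabiliser of this rigid structure in $\mathrm{PGL}(4,\mathbb{F})$ fixes a line of $\mathrm{PG}(3,\mathbb{F})$ --- for instance a graph subspace intertwining the two copies of that extension on $W_1$ and $W_2$ --- which is then a $G$-invariant line. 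This contradicts the hypothesis on $G$, so $V(U)$ contains no line and, by Theorem~\ref{mainthm}, $V(U)$ is an arc or a track.

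The step I expect to be the main obstacle is part (iii): controlling the lines in $V(U)$ when $k\geqslant 5$, where $V(U)$ is neither a complete intersection nor necessarily a curve, so that no classical degree bound applies and one must combine the exact dimension of $U$ with the absence of reducible forms via the projection to a three-dimensional space, including the verification that enough quadrics survive the projection in sufficiently general position. A close second is the swap case, which genuinely uses the fine structure of $U$ --- the way the absence of reducible forms forces the quadrics through two skew lines to fit together --- and is not visible from the incidence combinatorics of $\mathcal{L}$ alone. Care is also needed throughout because $\mathbb{F}$ is arbitrary, in particular possibly a small finite field where the anisotropic configurations really do occur, so every dimension count must be phrased as forcing a reducible member of $U$ rather than as a genericity statement, and ``reducible'' must be read uniformly as ``a product of two linear forms over $\mathbb{F}$''.
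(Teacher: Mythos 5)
Your proposal takes a genuinely different route from the paper, and the route you chose runs into gaps that the paper's argument avoids entirely.

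The paper's proof is short and local. Once Theorem~\ref{mainthm} gives a line $\ell\subset V(U)$, the group supplies a single element $\sigma\in G$ with $\sigma(\ell)\neq\ell$. The two lines $\ell$ and $\sigma(\ell)$ span a subspace of projective dimension $2$ or $3$, and one extends a handful of points on them by further points of $V(U)$ to a set $Y$ of $k+2$ points spanning a hyperplane $\pi$, imposing $k+2$ conditions on quadrics of $\pi$. Then the space of quadratic forms on $\pi$ vanishing on $Y$ has dimension at most $\binom{k}{2}-(k+2)=\binom{k-1}{2}-3<\dim U$, so the restriction map $U\to$ (quadrics on $\pi$) has nontrivial kernel, producing a form in $U$ divisible by the linear form defining $\pi$, i.e.\ a reducible form in $U$. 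There is no need to analyse the set $\mathcal{L}$ of all lines in $V(U)$, no need to exclude planes, and no need to count how many lines can occur: two distinct lines in a hyperplane together with enough extra points of $V(U)$ is already too much.

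Your plan instead tries to show that $G$ fixes a line by establishing the global structural claims (i)--(iii), and none of them is actually proved. For (i), the reduction to forms of the shape $X_1M_1+X_2M_2+X_3M_3+Q'$ is fine, and the $k=4$ factorisation is immediate, but the ``count against the reducible forms vanishing on $\pi$'' for $k\geqslant 5$ is not carried out and is not a routine dimension count: the kernel of the restriction $U\to$ (quadrics on $X_1=X_2=X_3=0$) lands in the space of $3\times(k-3)$ matrices, and over an arbitrary $\mathbb{F}$ a subspace of that size can avoid all nonzero rank-one matrices (already for $k=5$, the pencil $\langle X_1X_4+X_2X_5,\,X_2X_4+X_3X_5\rangle$ contains no nonzero reducible form). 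Your (ii) relies on (i), and (iii) --- which you yourself flag as the main obstacle --- is left as a sketch whose projection argument is not set up precisely enough to be checked; in particular it is not clear that ``enough quadrics survive'' after projection without a reducible form appearing en route. The swap case at the end is also only gestured at: you assert that the stabiliser in $\mathrm{PGL}(4,\mathbb{F})$ of a pencil with no reducible member through two skew lines must fix a line, but that needs an argument, and in any case $G$ need not be the full stabiliser of that configuration. Beyond the gaps, even a correct version of your plan would be doing far more work than the theorem requires, because the conclusion you need is not ``$G$ fixes a line'' but simply ``$V(U)$ cannot contain a line,'' and the latter follows from a single additional line and the dimension count above.
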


\begin{proof}

By Theorem \ref{mainthm}, we know that $V(U)$ is either an arc, a track or it contains a line. 

Suppose that $V(U)$ contains a line $\ell$. Since $G$ does not fix $\ell$, there is an element $\sigma \in G$ such that $\sigma(\ell) \neq \ell$. Since $G$ fixes $U$, it also fixes $V(U)$, so the points of $\sigma (\ell)$ are also in $V(U)$. The subspace spanned by $\ell$ and $\sigma(\ell)$ has dimension $2$ or $3$, depending on whether the lines intersect or not. We can choose $5$ (resp. $6$) points on these lines and extend them to a generating set for a hyperplane $\pi$ by adding an additional $k-3$ (resp. $k-4$) points of $V(U)$. This will give a set of $k+2$ points $Y$ of $V(U)$ in $\pi$ which define $k+2$ conditions on the space of quadratic forms defined on $\pi$, so the subspace of quadratic forms which are zero on $Y$ has co-dimension $k+2$. Let $\alpha(X)$ be a linear form whose kernel is $\pi$. The dimension of the subspace of quadratic forms defined on the $(k-1)$-dimensional vector subspace $\ker(\alpha)$ of ${\mathbb F}_q^k$, which are zero on $Y$, is at most
$$
{k \choose 2}-k-2={k-1 \choose 2}-3.
$$
Since this is smaller than the dimension of $U$, there is a quadratic form in $U$ which is zero on $\pi$ which implies that $U$ contains a reducible quadratic form, contradicting the hypothesis.

\end{proof}

We are interested in the applications of $V(U)$ to error correcting codes, so we will assume that $\mathbb{F}=\mathbb{F}_q$. However, we note that, over the algebraic closure of $\mathbb{F}_q$,  the varieties we will consider are $0$ or $1$-dimensional, since any hyperplane intersects them in a finite number of points. 

\section{Examples of spaces of quadratic forms}



Throughout the article, $\{ e_1,\ldots,e_k\}$ will be the points of $\mathrm{PG}(k-1,{\mathbb F})$ defined by the canonical basis.

\subsection{The normal rational curve}

Let $\alpha_1,\ldots,\alpha_k$ be distinct elements of ${\mathbb F}$ and suppose $|{\mathbb F}|\geqslant 2k-2$.

One can readily check that if
$$
U= \Big\langle (\alpha_i-\alpha_j)X_iX_j+(\alpha_1-\alpha_i)X_1X_i+(\alpha_j-\alpha_1)X_1X_j| \ 2 \leqslant i<j \leqslant k \Big\rangle.
$$




then
$$
V(U)=\{ \Big(\frac{1}{t-\alpha_1},\ldots,\frac{1}{t-\alpha_k} \Big) \ | \ t \in {\mathbb F} \setminus \{ \alpha_1,\ldots,\alpha_k \}\} \cup \{e_1,\ldots,e_k \} \cup \{ (1,\ldots,1) \}.
$$
Note that $\dim U={k-1 \choose 2}$. Since $|V(U)| \geqslant 2k-1$ and $V(U)$ is an arc, it is not contained in the union of two hyperplanes, so $U$ cannot contain a reducible quadratic form.

The set $V(U)$ is a normal rational curve, which is an arc of size $q+1$.

\subsection{The elliptic curve} \label{ecurve}

Let $\mathcal{E}$ be a plane elliptic curve defined as
$$
\mathcal{E}=\{(1,x,y) \ | \ y^2=x^3+ax+b\} \cup \{(0,0,1) \}.
$$

Define
$$
\phi_1(x,y)=1,
$$
and for $i \geqslant 2$,
$$
\phi_i(x,y)=\left\{ \begin{array}{ll}
y^j & \mathrm{if} \ i=3j,\\
x^2y^{j-1} & \mathrm{if} \ i=3j+1,\\
xy^j & \mathrm{if} \ i=3j+2.
\end{array} \right.
$$

For $k\geqslant 4$, define $\Phi_k$ to be the map from $\mathrm{PG}(2,q)$ to $\mathrm{PG}(k-1,q)$
$$
\Phi_k((1,x,y)) \mapsto (\phi_1(x,y),\phi_2(x,y),\ldots, \phi_k(x,y)).
$$
and 
$$
\Phi_k((0,0,1)) \mapsto (0,\ldots,0,1).
$$
For example,
$$
\Phi_{12}((1,x,y)) \mapsto (1,x,y,x^2,xy,y^2,x^2y,xy^2,y^3,x^2y^2,xy^3,y^4).
$$

In \cite{Giulietti2004}, Giullieti proves that  $\Phi_k(\mathcal E)$ is either a track or an arc. He goes on to prove that if the $j$-invariant of $\mathcal{E}$ is not zero then $\Phi_6(\mathcal{E})$ is not extendable as a track, $\Phi_4(\mathcal{E})$ is extendable as a track  by at most 1 point and $\Phi_5(\mathcal{E})$ is extendable as a track by at most 2 points.

\begin{theorem}
The set of points $\Phi_k(\mathcal{E})$ is contained in $V(U_k)$, for some $\left({k-1 \choose 2}-1\right)$-dimensional subspace $U_k$ of quadratic forms defined on ${\mathbb F}_q^k$.
\end{theorem}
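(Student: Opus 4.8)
The goal is to show that the image $\Phi_k(\mathcal{E})$ satisfies enough quadratic equations, specifically that the space of quadrics vanishing on it has dimension at least $\binom{k-1}{2}-1$, and that one can select a subspace $U_k$ of exactly this dimension containing no reducible quadratic form. The natural strategy is to count: the space of all quadratic forms on $\mathbb{F}_q^k$ has dimension $\binom{k+1}{2}$, and the point set $\Phi_k(\mathcal{E})$, together with its behaviour at the point at infinity $(0,\ldots,0,1)$, imposes a controlled number of conditions. The key observation is that the coordinate functions $\phi_1,\ldots,\phi_k$ are a basis for the Riemann–Roch space $L(D)$ of a suitable effective divisor $D$ on $\mathcal{E}$ (essentially a multiple of the point at infinity, with the $\phi_i$ chosen so that the pole orders $0,2,3,4,5,\ldots$ are exactly those realized in $L(m\cdot\infty)$, skipping the gap at $1$). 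A product $\phi_i\phi_j$ then lies in $L(2D)$, which by Riemann–Roch on an elliptic curve has dimension $\deg(2D) = 2(k-1)$ (for $k$ large enough that the relevant divisors are nonspecial). Hence the $\binom{k+1}{2}$ monomials $\phi_i\phi_j$ span at most a $(2k-2)$-dimensional space of functions, so the space of linear relations among them — which is precisely the space of quadrics vanishing on $\Phi_k(\mathcal{E})$ — has dimension at least
$$
\binom{k+1}{2} - (2k-2) = \binom{k-1}{2} + 1.
$$

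**Main steps.** First I would make precise the Riemann–Roch setup: identify $\mathcal{E}$ as a smooth projective curve of genus $1$, let $P_\infty$ be the point $(0,0,1)$, and verify that $\phi_i$ has a pole only at $P_\infty$ of order equal to the exponent pattern coming from $x$ (order $2$) and $y$ (order $3$) — so the $\phi_i$ form a basis of $L((k-1)P_\infty)$ after a shift accounting for the Weierstrass gap. (Here one uses that $\deg L(mP_\infty) = m$ for $m \geqslant 1$ on a genus-$1$ curve.) Second, note $\phi_i\phi_j \in L(2(k-1)P_\infty)$, of dimension $2(k-1)$, giving the count above. Third, I must pass from "dimension of vanishing quadrics is at least $\binom{k-1}{2}+1$" to "there is a subspace $U_k$ of dimension exactly $\binom{k-1}{2}-1$ containing no reducible quadratic form." The reducible quadrics form a subvariety (a cone over a Segre-type locus) of the projective space of quadrics; a generic codimension-$2$ subspace of the vanishing space avoids it provided the vanishing space is not entirely contained in the reducible locus. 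To rule that out, I would argue that a reducible quadric $\alpha\beta$ vanishing on $\Phi_k(\mathcal{E})$ forces $\Phi_k(\mathcal{E})$ into a union of two hyperplanes, contradicting that it spans $\mathrm{PG}(k-1,q)$ and has more than $2k$ points (using Giulietti's result, or directly, that $|\Phi_k(\mathcal{E})| = |\mathcal{E}(\mathbb{F}_q)|$ is close to $q+1$, which exceeds $2(k-1)$ for the range of $k$ under consideration) — so no reducible quadric vanishes on it at all, and any subspace of vanishing quadrics automatically avoids the reducible locus. Then simply take $U_k$ to be any $\left(\binom{k-1}{2}-1\right)$-dimensional subspace of the vanishing space.

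**Main obstacle.** The delicate point is the dimension count being an \emph{equality} at the relevant step — i.e. that the products $\phi_i\phi_j$ span \emph{all} of $L(2(k-1)P_\infty)$ rather than a proper subspace, since only surjectivity here guarantees the lower bound $\binom{k-1}{2}+1$ is not an overcount... actually it is the reverse: I need the image to have dimension \emph{at most} $2k-2$, which is automatic from $\phi_i\phi_j \in L(2(k-1)P_\infty)$, so the bound on the vanishing space is safely a lower bound. The real subtlety is instead the small-$k$ cases and the behaviour at $P_\infty$: one must check that the divisors $2(k-1)P_\infty$ are nonspecial (true for $k \geqslant 2$ on an elliptic curve) and that the pole-order bookkeeping for $\phi_i$ is correct at the boundary indices $i = 3j, 3j+1, 3j+2$, so that $\{\phi_1,\ldots,\phi_k\}$ is genuinely linearly independent and its span is exactly $L((k-1)P_\infty)$. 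I would handle this by an explicit table of pole orders for small $k$ and an induction for general $k$. Once the count is in hand, producing $U_k$ is immediate from the argument above that no reducible quadric can vanish on a spanning set of this size.
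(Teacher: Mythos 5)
Your approach is genuinely different from the paper's, which simply exhibits the quadratic forms explicitly and verifies by induction on $k$ (with base cases $k=4,5$). Your Riemann--Roch route is more conceptual and does reveal why the count comes out as it does, but it contains a bookkeeping error that over-counts by $2$, and that error actually matters: if the vanishing space truly had dimension at least $\binom{k-1}{2}+1$, one could pick a $\binom{k-1}{2}$-dimensional reducible-free subspace and invoke Theorem~\ref{glynnsthm} to conclude $\Phi_k(\mathcal{E})$ is an arc, contradicting Giulietti's result that it can be a genuine track.

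The slip is in the pole orders. With the paper's indexing, $\phi_1=1$, $\phi_2=x$, $\phi_3=y$, $\phi_4=x^2$, $\phi_5=xy$, $\phi_6=y^2,\ldots$, so $\phi_i$ has pole of order exactly $i$ at $P_\infty$ for $i\geqslant 2$, and pole order $0$ for $i=1$. The realized pole orders are $\{0,2,3,\ldots,k\}$, so $\phi_1,\ldots,\phi_k$ form a basis of $L(kP_\infty)$, not of $L((k-1)P_\infty)$. Consequently $\phi_i\phi_j \in L(2kP_\infty)$, and $\dim L(2kP_\infty) = 2k$, not $2k-2$. The space of quadratic forms vanishing on the affine part of $\Phi_k(\mathcal{E})$ is the kernel of the linear map $Q=\sum_{i\leqslant j} a_{ij}X_iX_j \mapsto \sum a_{ij}\phi_i\phi_j \in L(2kP_\infty)$, which therefore has dimension at least
$$
\binom{k+1}{2} - 2k = \binom{k-1}{2} - 1,
$$
which is exactly the claimed bound. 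One further small point you should make explicit: a form in this kernel automatically vanishes at the extra point $(0,\ldots,0,1)=e_k$, because $\phi_k^2$ is the unique monomial of pole order $2k$, so $\sum a_{ij}\phi_i\phi_j \equiv 0$ forces $a_{kk}=0$, which is precisely $Q(e_k)=0$. With these two corrections, taking $U_k$ to be any $\left(\binom{k-1}{2}-1\right)$-dimensional subspace of the vanishing space finishes the proof. Note that the irreducibility discussion is not required by the statement being proved (the theorem asserts only the existence of a subspace of the stated dimension), though it is certainly relevant when the result is fed into Theorems~\ref{mainthm} and~\ref{groupthm}. Compared with the paper's proof, your corrected argument is shorter and explains the dimension count structurally, while the paper's explicit recursion has the advantage of producing a concrete list of generators for $U_k$ without any appeal to Riemann--Roch and works uniformly without any hypothesis on the size of $\mathcal{E}(\mathbb{F}_q)$.
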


\begin{proof}
For $k=4$, $\Phi_4(\mathcal{E})$ is contained in $V(U_4)$, where
$$
U_4=\langle X_2^2-X_1X_4, X_3^2-X_2X_4+aX_2X_4+bX_1^2\rangle.
$$

For $k=5$, $\Phi_5(\mathcal{E})$ is contained in $V(U_5)$, where
$$
U_5=U_4\oplus \langle X_1X_5-X_2X_3, X_2X_5-X_3X_4,X_3X_5-X_4^2+aX_2^2+bX_1X_2\rangle.
$$
We proceed by induction.

The points of $\Phi_k(\mathcal{E})$ are contained in $V(U_{k-1})$ so it suffices to show that there are $k-2$ quadratic forms, $q_1,\ldots,q_{k-2}$ such that $\Phi_k(\mathcal{E})$ are contained in $V(U_{k})$, where we define
$$
U_k=U_{k-1} \oplus \langle  q_1,\ldots,q_{k-2} \rangle.
$$

We consider three cases.

Suppose that $k\equiv0$ modulo $3$. Then, for $i=1,\ldots,k-2$,
$$
q_i=\left\{ \begin{array}{ll}
X_kX_{i}-X_{k-1}X_{i+1}+aX_{k-1}X_{i-3}+bX_{k-3}X_{i-3} & \mathrm{if}\ i \equiv 0 \ \mathrm{mod} \ 3,\\
X_kX_i-X_{k-1}X_{i+1} & \mathrm{if}\ i \equiv 1 \ \mathrm{mod} \ 3,\\
X_kX_i-X_{k-2}X_{i+3} & \mathrm{if}\ i \equiv 2 \ \mathrm{mod} \ 3.
\end{array} \right.
$$

Suppose that $k\equiv1$ modulo $3$. Then, for $i=1,\ldots,k-2$,
$$
q_i=\left\{ \begin{array}{ll}
X_kX_i-X_{k-1}X_{i+1} & \mathrm{if}\ i \equiv 0 \ \mathrm{mod} \ 3,\\
X_kX_{i}-X_{k-1}X_{i+1}+aX_{k-3}X_{i-3}+bX_{k-2}X_{i-4} & \mathrm{if}\ i \equiv 1 \ \mathrm{mod} \ 3,\\
X_kX_i-X_{k-1}X_{i+1}+aX_{k-1}X_{i-1}+bX_{k-1}X_{i-2} & \mathrm{if}\ i \equiv 2 \ \mathrm{mod} \ 3.
\end{array} \right.
$$

Suppose that $k\equiv2$ modulo $3$. Then, for $i=1,\ldots,k-2$,
$$
q_i=\left\{ \begin{array}{ll}
X_kX_{i}-X_{k-1}X_{i+1}+aX_{k-3}X_{i-1}+bX_{k-3}X_{i-3} & \mathrm{if}\ i \equiv 0 \ \mathrm{mod} \ 3,\\
X_kX_i-X_{k-1}X_{i+1} & \mathrm{if}\ i \equiv 1 \ \mathrm{mod} \ 3,\\
X_kX_i-X_{k-1}X_{i+2} & \mathrm{if}\ i \equiv 2 \ \mathrm{mod} \ 3.
\end{array} \right.
$$
\end{proof}

\subsection{Glynn's track}

Let $G$ be the dihedral group $\mathrm{D}_5$ with $10$ elements generated by the reflections
$$
\sigma_1=(34)(25)(1) \ \mathrm{and} \ \sigma_2=(13)(45)(2)
$$
and consider the action of $G$ on the coordinates of $\mathrm{PG}(4,{\mathbb F})$.

To verify that $G$ fixes no line of $\mathrm{PG}(4,{\mathbb F})$ one can suppose that one of the points of the line is $(1,a,b,c,d)$, since one of the coordinates must be non-zero and after a suitable permutation of $G$ we can suppose that this is the first coordinate. Then the fact that $G$ fixes a line containing $(1,a,b,c,d)$ implies that all the points in the orbit of this point are collinear. Putting the points in this orbit as the rows of a matrix, this is equivalent to saying that the $10 \times 5$ matrix
$$
\left(\begin{array}{ccccc}
1 & a & b & c & d \\
1 & d & c & b & a \\
b & a & 1 & d & c \\
c & b & a & 1 & d \\
d & c & b & a & 1 \\
a & b & c & d & 1 \\
b & c & d & 1 & a \\
c & d & 1 & a & b \\
d & 1 & a & b & c \\
\end{array}\right)
$$
has rank two. It is a simple matter to check that this matrix never has rank two.

Let $U$ be the subspace
$$
\langle
X_2X_5+X_5X_4+X_2X_3-e(X_4X_3+X_2X_4+X_3X_5),
$$
$$
X_2X_4+X_5X_4+X_2X_1-e(X_4X_1+X_2X_5+X_1X_5),
$$
$$
X_3X_5+X_1X_5+X_2X_3-e(X_2X_5+X_1X_3+X_1X_2),
$$
$$
X_1X_3+X_3X_4+X_1X_5-e(X_3X_5+X_1X_4+X_5X_4),
$$
$$
X_1X_4+X_1X_2+X_4X_3-e(X_1X_3+X_2X_4+X_3X_2)
\rangle,
$$
where $e^2 \neq 1$.

We are interested in finding values of $e$ for which the intersection of these $5$ quadrics contains more than just the canonical basis.

Suppose that $cd \neq0$, $c^2 \neq d^2$ and $c^2,d^2 \neq1$ and that the point $(c,d,1,1,d)$ is in $V(U)$. Since the $5$ points in its orbit are in $V(U)$ too, $V(U)$ contains the canonical basis and at least an additional 5 points which span the space but which are not contained in two hyperplanes. Note that if the 10 points were contained in the union of two hyperplanes then two of the points in the orbit of $(c,d,1,1,d)$ would be in a hyperplane with three points of the canonical basis, which they are not. Hence, $U$ contains no reducible quadratic form, so Theorem~\ref{groupthm} implies that $V(U)$ is a track or possibly an arc.

The point $(c,d,1,1,d)$ is a zero of all these quadratic forms if and only if
$$
d^2+2d-e(1+2d)=0, \ 2d+cd-e(c+d^2+cd)=0, \ c+1+cd-e(2d+c)=0.
$$
This will have a solution for $c$ and $e$ providing
$$
3(d-1)(d^2+3d+1)=0.
$$
The condition $e^2 \neq 1$ implies
$$
d^2(d+2)^2 \neq (1+2d)^2,
$$
which rules out $d=1$.

Thus, if the characteristic is not $3$ then we have to choose $d$ such that $d^2+3d+1=0$ which imposes the condition that $5$ should be a square in ${\mathbb F}$. It also implies that $e^2+3e+1=0$. This will guarantee at least $10$ points in the track,
$$
V(U) \supseteq \{e_1,e_2,e_3,e_4,e_5, (c,d,1,1,d), (d,c,d,1,1), (1,d,c,d,1), (1,1,d,c,d), (d,1,1,d,c) \}.
$$
In fact, $V(U)$ is a normal rational curve, since $V(U)$ is  the intersection of the following six linearly independent quadratic forms (see Theorem~\ref{reallycastelnuovo})
$$
q_{45}=X_1X_2-(1+e)X_1X_3+X_2X_3,\
q_{34}=X_1X_2+X_1X_5-(1+e)X_2X_5,\
$$
$$
q_{23}=-(1+e)X_1X_4+X_1X_5+X_4X_5,\
q_{24}=X_1X_3+(2+e)X_1X_5+X_3X_5,\
$$
$$
q_{25}=X_1X_3+X_1X_4+(2+e)X_3X_4,\
q_{35}=(2+e)X_1X_2+X_1X_4+X_2X_4.
$$
This can be seen by verifying that the five quadratic forms in $U$ are (in the same order as before),
$$
q_{45}-(e+2)q_{34}-eq_{35}+(2e+1)q_{25}-q_{24}+q_{23},
$$
$$
(1+e)q_{34}+q_{35}+q_{24}, \ q_{45}-(1+e)q_{34}+q_{24}. \
$$
$$
(q+1)q_{25}-eq_{24}-eq_{23}, \ (e+1)q_{25}-eq_{45}-eq_{35}.
$$

If the characteristic is $3$ then the point $(1,1,1,1,1)$ is in $V(U)$. The three equations imply $e=-d$ and $c=-d-1$. Thus, for any value of $d \neq \pm 1$, we get at least $11$ points in the track $V(U)$, namely
$$
\{e_1,e_2,e_3,e_4,e_5, (1,1,1,1,1),(c,d,1,1,d), (d,c,d,1,1), (1,d,c,d,1), (1,1,d,c,d), (d,1,1,d,c) \}.
$$
In all cases removing the point $(1,1,1,1,1)$ from $V(U)$ we obtain Glynn's arc.
This implies that the intersection of the 5 quadrics described above can be a Glynn track (if the characeteristic is $3$) or a normal rational curve (if the characteristic is not $3$), in which case the value of $e$ must be chosen accordingly.

\section{Cyclic examples} \label{cyclicexamples}

The cyclic representation of the Glynn arc suggests that the cyclic group acting on the coordintaes (and more specifically the di-hedral group) may be a group for which Theorem~\ref{groupthm} will provide us with interesting examples of arcs and tracks.

Let $G$ be a group which fixes no line of $\mathrm{PG}(k-1,q)$.
Instead of trying to construct subspaces of quadratics forms directly which satisfy the hypothesis of Theorem~\ref{groupthm}, we aim to construct arcs $A$ of $\mathrm{PG}(k-1,q)$ of size $2k$ which are fixed by $G$ and let $U$ be the space of quadratic forms which are zero on $A$. Then the hypothesis will be satisfied since $V(U)$ spans the space and $U$ cannot contain a reducible quadratic form since $A$ is not contained in the union of two hyperplanes.

Let $\sigma$ be the cyclic permuation $(1,2,\ldots,k)$ acting on the coordinates of the points of $\mathrm{PG}(k-1,q)$. The $k \times k$  circulant matrix
$$
\mathrm{M}=\left(\begin{array}{ccccc}
x_1 & x_2 & \ldots & x_k \\
x_k & x_1 & \ldots & x_{k-1} \\
. & . & . & . \\
x_{2} & x_3 & \ldots & x_1 \\
\end{array} \right)
$$
has rank determined by
$$
k-\deg \mathrm{gcd}(x_1+x_2X+\cdots+x_kX^{k-1},1-X^{k-1}),
$$
see \cite{Ingleton1956}. Hence, $\mathrm{M}$ does not have rank two provided
$$
\deg \mathrm{gcd}(x_1+x_2X+\cdots+x_kX^{k-1},1-X^{k-1}) \neq k-2.
$$
If $\mathrm{M}$ does not have rank two then $\sigma$ fixes no line.


If $\mathrm{M}$ has rank $k$ then $\mathrm{M}^{-1}$ is also a cyclic matrix. Specifically,
$$
\mathrm{M}^{-1}=\frac{1}{x_1y_1+\cdots+x_ky_k}\left(\begin{array}{ccccc}
y_1 & y_2 & \ldots & y_k \\
y_k & y_1 & \ldots & y_{k-1} \\
. & . & . & . \\
y_{2} & y_3 & \ldots & y_1 \\
\end{array} \right),
$$
where $y=(y_1,\ldots,y_k)$ is a non-trivial solution to the system of equations,
$$
\sum_{i=1}^k x_iX_{i+j}=0
$$
for $j \in \{1,\ldots,k-1\}$, indices read modulo $k$.

Moreover, suppose that the set $\mathcal{A}$ of columns of the $k \times 2k$ matrix
$$
(I_k \ | \ \mathrm{M})
$$
is an arc. If we multiply by $\mathrm{M}^{-1}$ then this amounts to a change of basis, so the arc property is maintained. Therefore, the set of columns of
$$
(I_k \ | \ \mathrm{M}^{-1})
$$
is also an arc. Hence, the examples come in pairs which are projectively equivalent.

We will look for examples where $\mathrm{M}$ is a symmetric circulant matrix.


\subsection{The $5$-dimensional case}

An exhaustive search for $q \leqslant 49$ using GAP reveals that
$$
\mathcal{A}=\{e_1,\ldots,e_5\} \cup \{\sigma^j(1,a,b,b,a) \ | \ j=1,\ldots,5 \}
$$
is an arc for which $|V(U)|\geqslant q+1$ occurs for the following values of $a$ and $b$. Supposing that $\mathrm{M}$ is the matrix given by the pair $(a,b)$, the pair $(a',b')$ gives the matrix $\mathrm{M}^{-1}$. In the following table $\epsilon$ is the primitive element of ${\mathbb F}_q$ used by GAP \cite{GAP4}.

\begin{center}
\begin{tabular}{|c|cc|cc|cc|c|}\hline
$q$ & $a$ & $b$ & $a'$ & $b'$ & dim $U$ & $|V(U)|$ & arc or track \\ \hline\hline
 9 & $\epsilon$ &  $\epsilon^6$ & $\epsilon^3$ &  $\epsilon^2$ &5 & 11 & track \\
  & $\epsilon^5$ &  $\epsilon^7$ & $\epsilon^5$ &  $\epsilon^7$ &6 & 10 & arc \\ \hline
 11 & $\epsilon^3$ &  $\epsilon^4$ & $\epsilon^3$ &  $\epsilon^4$ & 6 & 12 & arc \\
\hline
 13 & $\epsilon^3$ &  $\epsilon^8$ & $\epsilon^8$ &  $\epsilon^3$ & 5 & 15 & track \\
  & $\epsilon^4$ &  $\epsilon^9$ & $\epsilon^7$ &  $\epsilon^5$ & 5 & 15 & track \\
\hline
 19 & $\epsilon^{12}$ &  $\epsilon^{17}$ & $\epsilon^{12}$ &  $\epsilon^{17}$ & 6 & 20 & arc \\ \hline
  29 & $\epsilon^{7}$ &  $\epsilon^{9}$ & $\epsilon^{7}$ &  $\epsilon^{9}$ & 6 & 30 & arc \\
    & $\epsilon^{8}$ &  $\epsilon^{17}$ & $\epsilon^{17}$ &  $\epsilon^{8}$ & 5 & 35 & track \\
        & $\epsilon^{9}$ &  $\epsilon^{19}$ & $\epsilon^{20}$ &  $\epsilon^{11}$ & 5 & 35 & track \\ \hline
 31 & $\epsilon$ &  $\epsilon^{25}$ & $\epsilon^{21}$ &  $\epsilon^4$ & 5 & 35 & track \\
  & $\epsilon^{3}$ &  $\epsilon^{5}$ & $\epsilon^{11}$ &  $\epsilon^{23}$ & 5 & 35 & track \\

  & $\epsilon^{5}$ &  $\epsilon^{28}$ & $\epsilon^{5}$ &  $\epsilon^{28}$ & 6 & 32 & arc \\
  & $\epsilon^{5}$ &  $\epsilon^{29}$ & $\epsilon^{9}$ &  $\epsilon^{20}$ & 5 & 35 & track \\
   & $\epsilon^{7}$ &  $\epsilon^{19}$ & $\epsilon^{12}$ &  $\epsilon^{22}$ & 5 & 35 & track \\
    & $\epsilon^{8}$ &  $\epsilon^{18}$ & $\epsilon^{27}$ &  $\epsilon^{25}$ & 5 & 35 & track \\
   & $\epsilon^{9}$ &  $\epsilon^{26}$ & $\epsilon^{10}$ &  $\epsilon^{21}$ & 5 & 35 & track \\ \hline
41 & $\epsilon^{7}$ &  $\epsilon^{25}$ & $\epsilon^{7}$ &  $\epsilon^{25}$ & 6 & 42 & arc \\ \hline
47 & $\epsilon^3$ &  $\epsilon^{22}$ & $\epsilon^{20}$ &  $\epsilon^{41}$ & 5 & 55 & track \\
 & $\epsilon^5$ &  $\epsilon^{26}$ & $\epsilon^{27}$ &  $\epsilon^{29}$ & 5 & 55 & track \\
  & $\epsilon^{17}$ &  $\epsilon^{19}$ & $\epsilon^{43}$ &  $\epsilon^{24}$ & 5 & 55 & track \\ \hline
49 & $\epsilon^9$ &  $\epsilon^{43}$ & $\epsilon^{42}$ &  $\epsilon^{30}$ & 5 & 55 & track \\
 & $\epsilon^{10}$ &  $\epsilon^{19}$ & $\epsilon^{22}$ &  $\epsilon^{37}$ & 5 & 55 & track \\
  & $\epsilon^{11}$ &  $\epsilon^{26}$ & $\epsilon^{30}$ &  $\epsilon^{36}$ & 5 & 55 & track \\
   & $\epsilon^{12}$ &  $\epsilon^{18}$ & $\epsilon^{38}$ &  $\epsilon^{29}$ & 5 & 55 & track \\
      & $\epsilon^{13}$ &  $\epsilon^{15}$ & $\epsilon^{18}$ &  $\epsilon^{6}$ & 5 & 55 & track \\
      & $\epsilon^{13}$ &  $\epsilon^{43}$ & $\epsilon^{13}$ &  $\epsilon^{43}$ & 6 & 50 & arc \\
\hline\hline
\end{tabular}
\end{center}

By Theorem~\ref{castelnuovo}, the entries in the table for which the dimension of $U$ is $6$ and $|V(U)|=q+1$ are normal rational curves. In fact, since these occur in the table only when $q$ is prime, this also follows from \cite[Theorem 1.8]{Ball2012}.

It is also of interest to note that $a=\epsilon^2$, $b=\epsilon^8$ gives an arc of size $10$ in $\mathrm{PG}(4,11)$ for which the dimension of $U$ is $5$ and is therefore not contained in a normal rational curve (which is contained in the common zeros of $6$ linearly independent quadratic forms).

In all cases, apart from the first entry which is the Glynn track, the projection of $V(U)$ from a point of $V(U)$ into $\mathrm{PG}(3,q)$ is contained in the intersection of two linearly independent quadratic forms, see Conjecture~\ref{mainconj}. This implies that the projection of $V(U)$ from two points of $V(U)$ is contained in a plane cubic curve. This is also true of the Glynn track. However, for the Glynn track, the projection of $V(U)$ into $\mathrm{PG}(3,q)$ from one point of $V(U)$ is not contained in the intersection of two linearly independent quadratic forms.

\subsection{The $7$-dimensional case}

An exhaustive search for $q \leqslant 47$ using GAP reveals that
$$
A=\{e_1,\ldots,e_7\} \cup \{\sigma^j(1,a,b,c,c,b,a) \ | \ j=1,\ldots,7 \}
$$
is an arc for which $|V(U)|\geqslant q+1$ occurs for the following values of $a$, $b$ and $c$. Supposing that $\mathrm{M}$ is the matrix given by the triple $(a,b,c)$, the triple $(a',b',c')$ gives the matrix $\mathrm{M}^{-1}$.

\begin{center}
\begin{tabular}{|c|ccc|ccc|cc|c|}\hline
$q$ & $a$ & $b$ & c & $a'$ & $b'$ & $c'$ & dim $U$ & $|V(U)|$ & arc or track \\ \hline\hline
 13 & $\epsilon^2$ &  $\epsilon^{10}$ & $\epsilon^{3}$ &  $\epsilon^{2}$ & $\epsilon^{10}$ & $\epsilon^3$ & $15$ & $14$ & arc \\ \hline
  23 & $\epsilon$ &  $\epsilon^8$ & $\epsilon^{2}$ &  $\epsilon^{6}$ & $\epsilon^{21}$ & $\epsilon^{9}$ & $14$ & $21$ & track\\
  & $\epsilon$ &  $\epsilon^{13}$ & $\epsilon^{16}$ &  $\epsilon^{9}$ & $\epsilon^{15}$ & $\epsilon^{17}$ & $14$ & $21$ & track\\
  & $\epsilon^5$ &  $\epsilon^{13}$ & $\epsilon^{7}$ &  $\epsilon^{21}$ & $\epsilon^{14}$ & $\epsilon^{20}$ & $14$ & $21$ & track\\
\hline
25  & $\epsilon$ &  $\epsilon^{19}$ & $\epsilon^{11}$ &  $\epsilon^{11}$ & $\epsilon^{13}$ & $\epsilon^{8}$ & $14$ & $21$ & track\\
 & $\epsilon$ &  $\epsilon^{17}$ & $\epsilon^{19}$ &  $\epsilon^{3}$ & $\epsilon^{9}$ & $\epsilon^{2}$ & $14$ & $21$ & track\\
  & $\epsilon^7$ &  $\epsilon^{8}$ & $\epsilon^{17}$ &  $\epsilon$ & $\epsilon^{15}$ & $\epsilon^{22}$ & $14$ & $21$ & track\\
   & $\epsilon^7$ &  $\epsilon^{5}$ & $\epsilon^{23}$ &  $\epsilon^{16}$ & $\epsilon^{7}$ & $\epsilon^{17}$ & $14$ & $21$ & track\\
    & $\epsilon^{13}$ &  $\epsilon^{11}$ & $\epsilon^{16}$ &  $\epsilon^{14}$ & $\epsilon^{6}$ & $\epsilon^{3}$ & $14$ & $21$ & track\\
\hline
27    & $\epsilon^{11}$ &  $\epsilon^{21}$ & $\epsilon^{7}$ &  $\epsilon^{11}$ & $\epsilon^{21}$ & $\epsilon^{7}$ & $15$ & $28$ & arc\\
\hline
29    & $\epsilon^{17}$ &  $\epsilon^{18}$ & $\epsilon^{24}$ &  $\epsilon^{17}$ & $\epsilon^{18}$ & $\epsilon^{24}$ & $15$ & $30$ & arc\\
\hline
41    & $\epsilon$ &  $\epsilon^{3}$ & $\epsilon^{34}$ &  $\epsilon$ & $\epsilon^{3}$ & $\epsilon^{34}$ & $15$ & $42$ & arc\\
\hline
43    & $\epsilon$ &  $\epsilon^{30}$ & $\epsilon^{8}$ &  $\epsilon$ & $\epsilon^{30}$ & $\epsilon^{8}$ & $15$ & $44$ & arc\\
 & $\epsilon$ &  $\epsilon^{32}$ & $\epsilon^{27}$ &  $\epsilon^{20}$ & $\epsilon^{15}$ & $\epsilon^{34}$ & $14$ & $49$ & track\\
  & $\epsilon^7$ &  $\epsilon^{19}$ & $\epsilon^{25}$ &  $\epsilon^{15}$ & $\epsilon^{41}$ & $\epsilon^{10}$ & $14$ & $49$ & track\\
    & $\epsilon^8$ &  $\epsilon^{22}$ & $\epsilon^{27}$ &  $\epsilon^{35}$ & $\epsilon^{23}$ & $\epsilon^{17}$ & $14$ & $49$ & track\\
\hline
47   & $\epsilon^2$ &  $\epsilon^{15}$ & $\epsilon^{7}$ &  $\epsilon^{12}$ & $\epsilon^{3}$ & $\epsilon^{38}$ & $14$ & $49$ & track\\
 & $\epsilon^4$ &  $\epsilon^{6}$ & $\epsilon^{31}$ &  $\epsilon^{17}$ & $\epsilon^{25}$ & $\epsilon^{36}$ & $14$ & $49$ & track\\
  & $\epsilon^5$ &  $\epsilon^{7}$ & $\epsilon^{25}$ &  $\epsilon^{29}$ & $\epsilon^{16}$ & $\epsilon^{41}$ & $14$ & $49$ & track\\
    & $\epsilon^5$ &  $\epsilon^{13}$ & $\epsilon^{19}$ &  $\epsilon^{15}$ & $\epsilon^{42}$ & $\epsilon^{40}$ & $14$ & $49$ & track\\
      & $\epsilon^5$ &  $\epsilon^{17}$ & $\epsilon^{30}$ &  $\epsilon^{30}$ & $\epsilon^{22}$ & $\epsilon^{37}$ & $14$ & $49$ & track\\
     & $\epsilon^8$ &  $\epsilon^{34}$ & $\epsilon^{43}$ &  $\epsilon^{15}$ & $\epsilon^{36}$ & $\epsilon^{12}$ & $14$ & $49$ & track\\
          & $\epsilon^9$ &  $\epsilon^{16}$ & $\epsilon^{24}$ &  $\epsilon^{39}$ & $\epsilon^{21}$ & $\epsilon^{41}$ & $14$ & $49$ & track\\
     & $\epsilon^{10}$ &  $\epsilon^{29}$ & $\epsilon^{21}$ &  $\epsilon^{41}$ & $\epsilon^{33}$ & $\epsilon^{27}$ & $14$ & $49$ & track\\
   & $\epsilon^{10}$ &  $\epsilon^{34}$ & $\epsilon^{31}$ &  $\epsilon^{44}$ & $\epsilon^{31}$ & $\epsilon^{39}$ & $14$ & $49$ & track\\ \hline
   49   & $\epsilon^{2}$ &  $\epsilon^{43}$ & $\epsilon^{39}$ &  $\epsilon^{44}$ & $\epsilon^{18}$ & $\epsilon^{16}$ & $14$ & $49$ & track\\
   & $\epsilon^{4}$ &  $\epsilon^{30}$ & $\epsilon^{32}$ &  $\epsilon^{6}$ & $\epsilon^{25}$ & $\epsilon^{23}$ & $14$ & $49$ & track\\
 & $\epsilon^{5}$ &  $\epsilon^{9}$ & $\epsilon^{46}$ &  $\epsilon^{23}$ & $\epsilon^{25}$ & $\epsilon^{42}$ & $14$ & $49$ & track\\
\hline\hline
\end{tabular}
\end{center}

As in the $5$-dimensional case, Theorem~\ref{castelnuovo} implies that the entries in the table for which the dimension of $U$ is $15$ and $|V(U)|=q+1$, $V(U)$ is a normal rational curve.

It is also of interest to note that $a=\epsilon^3$, $b=\epsilon^5$ and $c=\epsilon^{15}$ gives an arc of size $14$ in $\mathrm{PG}(6,17)$ for which the dimension of $U$ is $14$ and is therefore not contained in a normal rational curve (which is contained in the common zeros of $15$ linearly independent quadratic forms). The parameters $a=\epsilon^3$, $b=\epsilon^7$ and $c=\epsilon^{8}$ give an arc of size $14$ in $\mathrm{PG}(6,19)$ which is not contained in a normal rational curve for the same reason.

In all cases the projection of $V(U)$ from three points of $V(U)$ into $\mathrm{PG}(3,q)$ is contained in the intersection of two linearly independent quadratic forms, see Conjecture~\ref{mainconj}. This implies that the projection of $V(U)$ from four points of $V(U)$ is contained in a plane cubic curve.

\section{Castelnuovo's theorem}

In this section, we will be interested in deducing precisely what arcs and tracks we get by intersecting large amounts of quadrics. If $V(U)$ is large enough then it may be that we get only the classical examples described in the introduction.

Theorem~\ref{castelnuovo}, and its corollary Theorem~\ref{reallycastelnuovo}, is essentially Castelnuovo's theorem \cite{Castelnuovo1889}. It is not clear to us if it is known that this theorem holds over an arbitrary field, so we include a proof. Theorem~\ref{castelnuovo} verifies the conjecture in Section 5 of \cite{Glynn1994} under the hypothesis $2k \geqslant q$.

\begin{theorem} \label{castelnuovo}
Let $X$ be an arc of size $2k+1$ of $\mathrm{PG}(k-1,{\mathbb F})$ and let $U$ be the subspace of quadratic forms which are zero on $X$. If $\dim U \geqslant {k-1 \choose 2}$ then the projection of $V(U)$ from any $k-3$ points of $V(U)$ is contained in a conic.
\end{theorem}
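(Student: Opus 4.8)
The plan is to reduce the statement, by repeatedly projecting from points of $V(U)$, to the base case $k=3$, where "contained in a conic" is nearly tautological, and to control how the dimension of the space of quadrics behaves under one such projection. Fix a point $P\in V(U)$ and let $\pi$ be a hyperplane of $\mathrm{PG}(k-1,\mathbb F)$ not through $P$; projection from $P$ maps $X\setminus\{P\}$ to a set $X'$ of $2k$ points of $\pi\cong\mathrm{PG}(k-2,\mathbb F)$, and it maps $V(U)\setminus\{P\}$ into $\pi$. The first claim to establish is that $X'$ is an arc of $\mathrm{PG}(k-2,\mathbb F)$: a hyperplane of $\pi$ meeting $X'$ in $k-1$ points would pull back, together with $P$, to a hyperplane of $\mathrm{PG}(k-1,\mathbb F)$ meeting $X$ in $k$ points, contradicting the arc hypothesis. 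Since $|X'|=2k=2(k-1)+2$, $X'$ is in fact an arc of size one less than the bound appearing in Theorem~\ref{castelnuovo} for dimension $k-1$; I will need a version of the statement that tolerates this. So the real induction hypothesis should be phrased for arcs of size $\geqslant 2m$ in $\mathrm{PG}(m-1,\mathbb F)$ with the space of quadrics vanishing on them of dimension $\geqslant \binom{m-1}{2}-1$, and we prove that the projection from $m-3$ of the points lies on a conic.

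The heart of the argument is a linear-algebra estimate: if $U$ is the space of quadratic forms on $\mathbb F^k$ vanishing on $X$ and $U'$ is the space of quadratic forms on the hyperplane $\pi$ vanishing on $X'$, then pulling back along the projection and using that $P\in V(U)$ gives a linear map from $U$ to (something closely related to) $U'$, and I must show $\dim U' \geqslant \dim U - (k-2)$ or so. Concretely: a quadratic form $q$ on $\mathbb F^k$ vanishing on $X$ and with $P$ among its zeros can be written, after choosing coordinates so that $P=e_k$ and $\pi=V(X_k)$, as $q = X_k\,\ell(X_1,\dots,X_{k-1}) + q''(X_1,\dots,X_{k-1})$ where the "vanishing of $q$ on $X$" forces $q''$ to vanish on $X'$ up to the correction term $X_k\ell$; the linear form $\ell$ contributes at most $k-1$ parameters. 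Quotienting $U$ by the forms divisible by $X_k$ (there are none, since $U$ contains no reducible form and $V(U)$ spans), one gets $\dim U' \geqslant \dim U - (k-1)$, which together with $\dim U\geqslant\binom{k-1}{2}-1$ yields $\dim U'\geqslant \binom{k-1}{2}-1-(k-1)=\binom{k-2}{2}-1$; I should be careful to recover the clean bound $\binom{k-2}{2}$ in the case $\dim U=\binom{k-1}{2}$ of the theorem as stated. Iterating $k-3$ times brings us to $m=3$, $|X'|\geqslant 6$, $\dim U'\geqslant 0$ — but at the last steps I actually want $\dim U'\geqslant 1$ so that there is a nonzero conic through the projected variety, which is exactly the statement "contained in a conic". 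Tracking the dimension count carefully so that it does not degenerate to $0$ prematurely is where the bookkeeping lives.

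The main obstacle I anticipate is precisely this dimension count: showing that each projection loses at most $k-1$ (and in the favourable steps at most $k-2$) from $\dim U$, and in particular that after $k-3$ projections the space of quadrics on $\mathrm{PG}(2,\mathbb F)$ vanishing on the image is still nonzero, i.e. at least one conic remains. A subtlety is that a quadric in $U$ might become reducible or even vanish identically on $\pi$ after restriction; ruling this out uses that $U$ contains no reducible form and that $V(U)$ spans, so that no form of $U$ is divisible by the linear form defining $\pi$ — but one must check the projected space $U'$ inherits enough of this structure at each stage, or else argue directly with the $2m$ points of $X'$ (which, not lying in a union of two hyperplanes, still forbid the restricted forms from being reducible). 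A secondary technical point: projection from a point of $V(U)$ that happens to lie on $X$ is the natural move, and since $|X|=2k+1>k$ we always have at least the points of $X$ available; I will project from points of $X$ throughout, and only at the end invoke that $V(U)\setminus\{P\}$ maps into $V(U')$ to conclude the statement about $V(U)$ rather than just about $X$.
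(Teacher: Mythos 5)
The inductive-by-projection plan is attractive, but the dimension bookkeeping does not close, and this is not a cosmetic issue — it is precisely the difficulty that forces the paper to take a different route.

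Your key estimate is that projection from $P=e_k$ loses at most $k-1$ dimensions from the space of quadrics: writing $q = X_k\,\ell + q''$, the kernel of $q\mapsto\ell$ has codimension at most $k-1$, and this kernel injects into $U'$. That is correct as far as it goes. But the budget per step is only $k-2$: to restore the hypothesis of the theorem one dimension down you need $\dim U'\geqslant\binom{k-2}{2}$, and $\binom{k-1}{2}-\binom{k-2}{2}=k-2$, not $k-1$. Your own computation contains an arithmetic slip that hides this: $\binom{k-1}{2}-1-(k-1)$ equals $\binom{k-2}{2}-2$, not $\binom{k-2}{2}-1$ (check $k=5$: $6-1-4=1$, while $\binom{3}{2}-1=2$). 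Tracking the honest bound, after $m$ projections you have lost at most $(k-1)+(k-2)+\cdots+(k-m)$ dimensions, and after $k-3$ projections you are left with at least $\binom{k-1}{2}-\bigl(\binom{k}{2}-3\bigr)=4-k$, which is $\leqslant 0$ for all $k\geqslant 4$. So no conic survives the iteration, and the induction collapses before it can reach $\mathrm{PG}(2,\mathbb F)$.

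To rescue this approach one would need to show that the image of $q\mapsto\ell$ is at most $(k-2)$-dimensional — equivalently, that the tangent hyperplanes at $P$ to the quadrics in $U$ all pass through a common line — and this is exactly where the arc structure of $X$ has to be used in an essential way, not just to say that restricted forms are irreducible. You flag the worry ("I should be careful to recover the clean bound...") but do not supply the missing argument. The paper avoids the iteration entirely: fixing coordinates so that $e_1,\dots,e_k\in X$, it builds the $\binom{k-1}{2}\times\binom{k-1}{2}$ matrix $\mathrm{M}(X_{k-1},X_k)$, uses Cramer's rule to express each $x_i$ and $x_ix_j$ in terms of determinants $\det\mathrm{M}_i$, $\det\mathrm{M}_{ij}$, derives a polynomial identity from the $\geqslant k+1$ points of $V=X\setminus\{e_1,\dots,e_k\}$, and then a $\gcd$ argument produces a single quadratic form in $X_i,X_{k-1},X_k$ vanishing on $V(U)$ — in effect performing all $k-3$ projections at once and constructing the conic explicitly. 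That global construction is what supplies the strong "$k-2$ per step" information your step-by-step argument cannot reach.
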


\begin{proof}

After a suitable change of basis, we can suppose that the canonical basis $\{e_1,\ldots, e_k \} \subseteq X$ and let $V=X \setminus \{e_1,\ldots,e_k \}$.


Let $C$ be a basis for a ${k-1 \choose 2}$-dimensional subspace of the space of quadratic forms that are zero on $X$.

Let $\mathrm{M}=(m_{ij})$ be the $|C| \times {k-1\choose 2}$ matrix whose rows are indexed by the elements of $C$, whose first $k-2$ columns are indexed $X_1,\ldots,X_{k-2}$ and whose next ${k-2\choose 2}$ columns are indexed by $X_iX_j$, where $i,j \in \{1,\ldots,k-2\}$ and $i<j$.  The row-column entry, where the row is indexed by the quadratic form
$$
q(X)=\sum_{1 \leqslant i<j\leqslant k} a_{ij}X_iX_j
$$
is defined as
$$
a_{i,k-1}X_{k-1}+a_{i,k}X_k
$$
for the first $i=1,\ldots,k-2$ columns and
$a_{ij}$
for the remaining columns. Thus, $\mathrm{M}$ has entries which are constants or linear forms in $X_{k-1}$ and $X_k$ and its determinant is a homogeneous polynomial of degree $k-2$.

Let $x=(x_1,\ldots,x_k)$ be a point in the intersection of all the quadrics in $C$.  Then
$$
\mathrm{M}(x_{k-1},x_k)\left( \begin{array}{c} x_1 \\ \vdots \\ x_{k-2} \\ x_1x_2 \\ \vdots \\x_{k-3}x_{k-2} \end{array} \right)=v x_{k-1}x_k,
$$
where $v$ is the vector whose coordinates are indexed by the quadrics in $C$ and whose coordinate indexed by $q(X)$ has entry $-a_{k-1,k}$. 

We can solve for $x_i$ ($i=1,\ldots,k-2$) and $x_ix_j$ ($i,j=1,\ldots,k-2$, $i<j$) by Cramer's method, defining $\mathrm{M}_i$ to be the matrix obtained by from $\mathrm{M}$ by replacing the column indexed by $x_i$ by $v$ and $\mathrm{M}_{ij}$ to be the matrix obtained by from $\mathrm{M}$ by replacing the column indexed by $x_ix_j$ by $v$.

Thus,
$$
\det(\mathrm{M}(x_{k-1},x_k)) x_i=x_{k-1}x_k \det(\mathrm{M}_i(x_{k-1},x_k))
$$
and
$$
\det(\mathrm{M}(x_{k-1},x_k)) x_ix_j=x_{k-1}x_k\det(\mathrm{M}_{ij}(x_{k-1},x_k)).
$$

If $\det(\mathrm{M}(x_{k-1},x_k))= 0$, for some $x \in V$, then there is a linear combination of the quadrics in $C$ which is a quadric
$$
(x_kX_{k-1}-x_{k-1}X_k)(d_1X_1+\cdots+d_{k-2}X_{k-2})=d_{k-1} X_{k-1}X_k,
$$
for some $d_1,\ldots,d_{k-1} \in {\mathbb F}$. Since $x$ is a zero of this quadric and $x_{k-1}x_k \neq 0$ we have that $d_{k-1} =0$, which implies that in the space of quadrics spanned by $C$ there is a hyperplane pair (reducible) quadric. However, the arc $X$ is not contained in a hyperplane pair quadric, which is a contradiction. Observe that this implies that $\det(\mathrm{M})(X_{k-1},X_k)$ is not identically zero.

Similarly, if $\det(\mathrm{M})(X_{k-1},0)=0$ then there is a linear combination of the quadrics in $C$ which is a quadric
$$
X_k(d_1X_1+\cdots+d_{k-2}X_{k-2}+d_{k-1} X_{k-1}),
$$
for some $d_1,\ldots,d_{k-1} \in {\mathbb F}$, again contradicting the fact that $C$ does not contain a hyperplane pair quadric.

As a homogeneous polynomial in $(X_{k-1},X_k)$, the determinants are non-zero and have degree
$$
\deg\det(\mathrm{M}(X_{k-1},X_k) )=k-2,\ \deg\det(\mathrm{M}_i(X_{k-1},X_k))=k-3 
$$
and
$$
\deg\det(\mathrm{M}_{ij}(X_{k-1},X_k))=k-2.
$$
Note that we have also proved that the degree of $\det\mathrm{M}$ in $X_{k-1}$ is also $k-2$.

From this we deduce that, for $x$ in the intersection of all the quadrics in $C$, $x$ is a zero of
$$
X_{k-1}X_k \det(\mathrm{M}_i) \det(\mathrm{M}_j)- \det(\mathrm{M}) \det(\mathrm{M}_{ij}),
$$
which is a homogeneous polynomial in $(X_{k-1},X_k)$ of degree $2k-4$. Note that we have not written the indeterminates for the determinants, and will not from now on, for the sake of readability.

\

We can choose $b_j$, $j \in \{1,\ldots k-3\} \setminus \{ i \}$, so that
$$
X_{k-1}X_k \det(\mathrm{M}_i) \sum_{j \neq i}  b_j \det(\mathrm{M}_j)- \det(\mathrm{M}) \sum_{j \neq i}  b_j\det(\mathrm{M}_{ij})
$$
is a polynomial of degree at most $k$ in $X_{k-1}$. This polynomial has a zero $(X_{k-1},X_k)=(a_{k-1},a_{k})$ for every $a=(a_1,\ldots,a_k) \in V$. Since $|X| \geqslant 2k+1$, we have that $|V| \geqslant k+1$. Moreover, since $X$ is an arc the pairs $(a_{k-1},a_{k})$ are distinct for distinct points in $V$. Therefore, the polynomial above is identically zero.

In other words,
\begin{equation} \label{divideit}
X_{k-1}X_k \det(\mathrm{M}_i) \sum_{j \neq i}  b_j \det(\mathrm{M}_j) \equiv \det(\mathrm{M}) \sum_{j \neq i}  b_j\det(\mathrm{M}_{ij}).
\end{equation}

For all $x \in V$,
$$
\det(\mathrm{M}(x_{k-1},x_k)) x_j=x_{k-1}x_k \det (\mathrm{M}_j(x_{k-1},x_k)).
$$
Hence, if
$$
\sum_{j \neq i}  b_j \det(\mathrm{M}_j) \equiv 0
$$
then
$$
(\sum_{j \neq i}  b_j x_j )\det(\mathrm{M}(x_{k-1},x_k)) =0,
$$
for all $x \in V$.

We have already proven that $\det(\mathrm{M}(x_{k-1},x_k)) \neq 0$.

Since $V$ is a set of more than $k$ points of an arc,
$$
\sum_{j \neq i}  b_j x_j \neq 0
$$
for every element $x \in V$.

Thus,
$$
\sum_{j \neq i}  b_j \det(\mathrm{M}_j) \not\equiv 0.
$$


Suppose that the degree of $g=\mathrm{gcd}(\det(\mathrm{M}),X_{k-1}X_k\det(\mathrm{M}_{i}))$ is $r \leqslant k-4$. Let $m=\det(\mathrm{M})/g$ and $m_i=X_{k-1}X_k\det(\mathrm{M}_i)/g$.

We can choose $d_j$, $j \in \{1,\ldots k-3\} \setminus \{ i \}$, so that
$$
m_i  \sum_{j \neq i}  d_j \det(\mathrm{M}_j)-m\sum_{j \neq i}  d_j\det(\mathrm{M}_{ij})
$$
is a polynomial of degree at most $k-r$ in $X_{k-1}$.

This polynomial is zero at all $x \in V$, which implies that it is identically zero. This implies that $m_i$ divides $\sum_{j \neq i}  d_j\det(\mathrm{M}_{ij})
$ and so $\sum_{j \neq i}  d_j \det(\mathrm{M}_j)$ has degree $k-r-(k-r-1)=1$ in $X_{k-1}$. It is divisible by $m$, which has degree $k-2-r \geqslant 2$ in $X_{k-1}$. This implies that $\sum_{j \neq i}  d_j \det(\mathrm{M}_j)$ is identically zero, which we already proved before that it is not.

Thus, the degree of $g$ is $k-3$.
For all $x \in V$, the point $x$ is a zero of the quadratic form
$$
\frac{X_{k-1}X_{k}\det(\mathrm{M}_{i})}{g}-X_i\frac{\det(\mathrm{M})}{g}.
$$
This implies that the projection of $X$ from any $k-3$ points of $X$ is contained in a conic.

\end{proof}

\begin{theorem} \label{reallycastelnuovo}
Let $X$ be an arc of size $2k+1$ of $\mathrm{PG}(k-1,{\mathbb F})$ defining at most $2k-1$ linearly independent conditions on the space of quadratic forms. Then $X$ is contained in a normal rational curve.
\end{theorem}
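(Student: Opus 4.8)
The plan is to obtain this as a consequence of Theorem~\ref{castelnuovo} by lifting a conic through a chain of projections. Since $X$ defines at most $2k-1$ conditions, the space $U$ of quadratic forms vanishing on $X$ has dimension at least $\binom{k+1}{2}-(2k-1)=\binom{k-1}{2}$, so Theorem~\ref{castelnuovo} applies: the projection of $X$ from any $k-3$ of its points is contained in a conic. That projected point set is an arc of size $k+4\geqslant 5$ in $\mathrm{PG}(2,{\mathbb F})$, hence cannot lie on a line pair, so the conic is non-degenerate, i.e.\ a normal rational curve of $\mathrm{PG}(2,{\mathbb F})$. I will prove, by downward induction on $j$, that the projection of $X$ from any $j$ of its points is contained in a normal rational curve of $\mathrm{PG}(k-1-j,{\mathbb F})$. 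The case $j=k-3$ is what has just been established, and the case $j=0$ is the theorem.

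The inductive step rests on a lifting lemma that is the technical core: if $Y$ is an arc of size at least $m+4$ in $\mathrm{PG}(m,{\mathbb F})$ such that the projection of $Y$ from every one of its points is contained in a normal rational curve of $\mathrm{PG}(m-1,{\mathbb F})$, then $Y$ is contained in a normal rational curve of $\mathrm{PG}(m,{\mathbb F})$. To prove the lemma, let $\mathcal C$ be the unique normal rational curve through the first $m+3$ points $p_1,\dots,p_{m+3}$ of $Y$; this exists and is unique because any $m+3$ points of an arc are in general position. Fix any further point $x\in Y$ and any $i\in\{1,2,3\}$. The normal rational curve carrying the projection of $Y$ from $p_i$, and the curve obtained by projecting $\mathcal C$ from $p_i$, both pass through the $m+2$ points obtained by projecting $p_\ell$ ($\ell\neq i$) from $p_i$, which are in general position in $\mathrm{PG}(m-1,{\mathbb F})$; since a normal rational curve in $\mathrm{PG}(m-1,{\mathbb F})$ is determined by $(m-1)+3=m+2$ points in general position, these two curves coincide. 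Hence the projection of $x$ from $p_i$ lies on the projection of $\mathcal C$ from $p_i$, which means $x$ lies on the cone over $\mathcal C$ with vertex $p_i$.

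Thus $x$ lies in the intersection of the three cones over $\mathcal C$ with vertices $p_1,p_2,p_3$. Using the description of $\mathcal C$ as the common zeros of the $2\times 2$ minors of the standard $2\times m$ Hankel (catalecticant) matrix in the coordinates of $\mathrm{PG}(m,{\mathbb F})$, and the $2$-transitivity of $\mathrm{PGL}(2,{\mathbb F})$ on the points of $\mathcal C$ to move $p_1,p_2$ to $e_1$ and $e_{m+1}$, one checks directly that the intersection of the cones over $\mathcal C$ with vertices $p_1$ and $p_2$ is exactly $\mathcal C\cup\langle p_1,p_2\rangle$. If $x\notin\mathcal C$ then $x\in\langle p_1,p_2\rangle$ and, by the same argument applied to $p_1,p_3$, also $x\in\langle p_1,p_3\rangle$; since $p_1,p_2,p_3$ are not collinear these two lines meet only in $p_1\in\mathcal C$, so $x=p_1\in\mathcal C$, a contradiction. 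Hence $Y\subseteq\mathcal C$, which proves the lemma.

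For the inductive step, fix $1\leqslant j\leqslant k-3$ and assume the claim for $j$. The projection $Y$ of $X$ from any $j-1$ of its points is an arc of size $2k+2-j$ in $\mathrm{PG}(k-j,{\mathbb F})$ — the images are distinct and the arc property is preserved because any at most $k$ points of the arc $X$ are in general position. For each point $q$ of $Y$, the projection of $Y$ from $q$ equals a projection of $X$ from $j$ of its points, hence is contained in a normal rational curve by the inductive hypothesis. Since $2k+2-j\geqslant(k-j)+4$, the lifting lemma applies and shows that $Y$ is contained in a normal rational curve of $\mathrm{PG}(k-j,{\mathbb F})$. This completes the induction, and the case $j=0$ gives the theorem. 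I expect the main obstacle to be the lifting lemma, and within it the explicit determination of the intersection of the cones over a normal rational curve from two of its points; the remaining inputs — uniqueness of the normal rational curve through $m+3$ points in general position, the fact that projecting a normal rational curve from one of its points again gives a normal rational curve, and the general-position bookkeeping for arcs and their projections — are classical.
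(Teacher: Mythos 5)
Your proof is correct, and its first move is exactly the paper's: translate the condition count into $\dim U \geqslant \binom{k-1}{2}$, invoke Theorem~\ref{castelnuovo} to get that the projection from any $k-3$ points of $X$ lies on a (necessarily irreducible) conic, and then lift. Where you diverge is that the paper simply cites the lifting step --- ``it is well-known that this implies $X$ is contained in a normal rational curve'' with a pointer to \cite{Ball2012} --- whereas you supply a self-contained proof. Your lifting lemma (an arc of size $\geqslant m+4$ in $\mathrm{PG}(m,{\mathbb F})$ each of whose one-point projections lies on a normal rational curve must itself lie on one) is the right statement, and your argument via three cones over the unique normal rational curve $\mathcal C$ through $p_1,\dots,p_{m+3}$ is sound: once one checks that the intersection of two such cones with vertices on $\mathcal C$ is $\mathcal C\cup\langle p_1,p_2\rangle$ (a direct Hankel-minor computation, as you indicate), the pigeonholing by non-collinearity of $p_1,p_2,p_3$ finishes it. This gives a more elementary and explicit route to the conclusion than the reference, at the cost of leaning on two classical facts you quote without proof: uniqueness of the normal rational curve through $m+3$ points in general position, and that projecting a normal rational curve from one of its points yields a normal rational curve; both hold over the fields in play here (the arc bound $2k+1\leqslant q+k-1$ forces $q\geqslant k+2\geqslant m+2$ at every stage, so the relevant curves have enough points). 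A small cosmetic point: $\mathrm{PGL}(2,{\mathbb F})$ acts $3$-transitively, not merely $2$-transitively, on $\mathcal C$, though you only need $2$-transitivity to normalise $p_1,p_2$.
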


\begin{proof}
By Theorem~\ref{castelnuovo}, the projection of $X$ from any $k-3$ points of $X$ is contained in a conic. It is well-known that this implies that $X$ is contained in a normal rational curve, see for example~\cite{Ball2012}.
\end{proof}

We conjecture the following, based on the examples found by computer. This should be compared to Fano's theorem from \cite{Fano1894} which states that, under the additional hypothesis that all subsets of $X$ with the same number of elements impose the same number of conditions on the space of quadratic forms, $V(U)$ is a curve of degree $k$.

\begin{conjecture} \label{mainconj}
Let $X$ be an arc of size $2k+3$ of $\mathrm{PG}(k-1,{\mathbb F})$ and let $U$ be the subspace of quadratic forms which are zero on $X$. If $\dim U \geqslant {k-1 \choose 2}-1$ then the projection of $V(U)$ to $\mathrm{PG}(3,q)$ from any $k-4$ points of $V(U)$ is contained in the intersection of two linearly independent quadratic forms.
\end{conjecture}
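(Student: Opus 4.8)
The plan is to split on the size of $\dim U$, after noting that the hypotheses already rule out reducible quadrics in $U$: a reducible quadratic form vanishing on $X$ would confine the $2k+3$ points of $X$ to the union of two hyperplanes, whereas a hyperplane meets an arc of $\mathrm{PG}(k-1,\mathbb F)$ in at most $k-1$ points, forcing $|X|\leqslant 2k-2$. The same estimate shows that $X$, hence $V(U)$, spans $\mathrm{PG}(k-1,\mathbb F)$ and is covered by no two hyperplanes; I will use this for non-degeneracy.

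Suppose first that $\dim U\geqslant\binom{k-1}{2}$. Fix $k-4$ points $q_1,\dots,q_{k-4}$ of $V(U)$ and choose two further points $p_1,p_2$ of $V(U)$ whose images $\bar p_1,\bar p_2$ under projection from $\langle q_1,\dots,q_{k-4}\rangle$ are distinct; such $p_1,p_2$ exist because $V(U)$ spans the space and so is not collapsed to a point by this projection. By Theorem~\ref{castelnuovo} (applied after shrinking $X$ to size $2k+1$, which only enlarges $U$), the projection of $V(U)$ from the $(k-3)$-set $\{q_1,\dots,q_{k-4},p_i\}$ lies in a conic $\mathcal C_i$; since $X$ lies in no two hyperplanes, $\mathcal C_i$ is non-degenerate, so the cone $Q_i$ over $\mathcal C_i$ with vertex $\bar p_i$ is a rank-$3$ quadric of $\mathrm{PG}(3,\mathbb F)$ containing the projection $W$ of $V(U)$ from $q_1,\dots,q_{k-4}$. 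A quadric of $\mathrm{PG}(3,\mathbb F)$ singular at the two distinct points $\bar p_1,\bar p_2$ would have rank at most $2$ and hence a zero set contained in a plane; but $W$ spans $\mathrm{PG}(3,\mathbb F)$, so $Q_1\neq Q_2$ and $W\subseteq Q_1\cap Q_2$ with $Q_1,Q_2$ linearly independent.

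Now suppose $\dim U=\binom{k-1}{2}-1$, where Theorem~\ref{castelnuovo} no longer applies and I would imitate its proof. Put $N=\binom{k-1}{2}$; after a change of basis take $e_1,\dots,e_k\in X$ and aim to project from $e_1,\dots,e_{k-4}$, setting $V=X\setminus\{e_1,\dots,e_k\}$, so $|V|=k+3$, and (since the projection of an arc from a set of its own points is injective on the rest) the $k+3$ pairs $(a_{k-1}:a_k)$ for $a\in V$ are distinct. Take $C$ to be a basis of $U$ and form exactly the matrix $\mathrm M=\mathrm M(X_{k-1},X_k)$ of the proof of Theorem~\ref{castelnuovo}: first $k-2$ columns indexed by $X_1,\dots,X_{k-2}$ with entries $a_{i,k-1}X_{k-1}+a_{i,k}X_k$, the remaining $\binom{k-2}{2}$ columns indexed by the products $X_iX_j$ ($i<j\leqslant k-2$) with constant entries $a_{ij}$, and right-hand side $vX_{k-1}X_k$, so that $\mathrm M(x_{k-1},x_k)u=x_{k-1}x_k\,v$ for every $x\in V(U)$, where $u=(x_1,\dots,x_{k-2},(x_ix_j)_{i<j\leqslant k-2})$. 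Now $\mathrm M$ is $(N-1)\times N$, one row short of square. The ``no reducible quadric'' reasoning of Theorem~\ref{castelnuovo} should be upgraded to show that $\mathrm M$ has rank $N-1$ over $\mathbb F(X_{k-1},X_k)$; then $\ker\mathrm M$ is spanned by the vector $n$ of signed maximal minors $n_\ell=(-1)^\ell\det\mathrm M^{(\ell)}$ ($\mathrm M^{(\ell)}$ being $\mathrm M$ with column $\ell$ deleted), and fixing a particular solution $u^\ast$ of $\mathrm Mu^\ast=x_{k-1}x_kv$, rational in $X_{k-1},X_k$, each $x\in V(U)$ satisfies $u=u^\ast+s\,n$ for some scalar $s=s(x)$. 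Hence $u_\ell n_m-u_m n_\ell=u^\ast_\ell n_m-u^\ast_m n_\ell$ does not depend on the point; taking $\{\ell,m\}=\{k-3,k-2\}$, and a second combination pitting $k-3$ or $k-2$ against the column indexed by $X_{k-3}X_{k-2}$, then cancelling in each case the greatest common divisor of the minors involved — exactly as the factor $g$ is cancelled in Theorem~\ref{castelnuovo} to bring degrees down to $2$ — should yield two quadratic forms in $X_{k-3},X_{k-2},X_{k-1},X_k$ vanishing on the projection of $V(U)$ from $e_1,\dots,e_{k-4}$. The bookkeeping transfers: $\det\mathrm M^{(\ell)}$ has degree $k-3$ or $k-2$ in $(X_{k-1},X_k)$ according as the deleted column is linear or constant, and the $k+3$ distinct values $(a_{k-1}:a_k)$ annihilate auxiliary polynomials of degree $\leqslant k+2$ after the degree reductions of (\ref{divideit}).

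The main obstacle is precisely the rectangular matrix. With no single $\det\mathrm M$ available, the tidy Cramer identities of Theorem~\ref{castelnuovo} give way to identities among the $N$ signed maximal minors, and three points need care: (i) proving the rank is exactly $N-1$ generically — the analogue of $\det\mathrm M\not\equiv0$, now harder because a rank drop by two must be excluded; (ii) selecting, for each quadric, the minors whose greatest common divisor brings the degree down to exactly $2$ in the four retained variables and no lower; and (iii) checking that two linearly independent quadrics, not merely one, survive. I expect (iii) to be the crux, since it is here that the exact arc size $2k+3$ and the deficiency $1$ in $\binom{k-1}{2}-1$ must play off the dimension $\binom{4}{2}$ of the space of quadratic forms on $\mathrm{PG}(3,\mathbb F)$; it may even be necessary, as in Fano's theorem, to first establish that all sufficiently large sub-arcs of $X$ impose the same number of conditions on the space of quadratic forms, so that the hypothesis $|X|=2k+3$ is in reality not tight.
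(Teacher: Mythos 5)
This statement is labelled a \emph{Conjecture} in the paper; the authors explicitly offer no proof, only computer evidence, so there is no paper argument to compare yours against. The question is whether your proposal settles the conjecture, and it does not.

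Your first case, $\dim U\geqslant{k-1\choose 2}$, is essentially sound, though the justification for invoking Theorem~\ref{castelnuovo} needs tightening. Shrinking $X$ to a $(2k+1)$-subset $X'$ does enlarge $U$ to $U'$, but that \emph{shrinks} $V(U')$, so the theorem's conclusion would land on a smaller variety than $V(U)$, which is the wrong direction. The correct observation is either that the proof of Theorem~\ref{castelnuovo} uses only $|X|\geqslant 2k+1$ and can be re-run verbatim for $|X|=2k+3$, or that $X$ imposes at most $2k-1<2k+1$ independent conditions on quadrics, so one may choose $X'$ of size $2k+1$ with $U'=U$ exactly. Granting that, your two-cones argument is correct: projecting from $\{q_1,\dots,q_{k-4},p_i\}$ gives a non-degenerate conic $\mathcal C_i$ (a degenerate one would confine an arc of size $\geqslant k+6$ to two lines), the cone $Q_i$ over $\mathcal C_i$ from $\bar p_i$ is a quadric whose singular locus is exactly $\{\bar p_i\}$, and two such cones with distinct vertices cannot coincide, yielding two independent quadrics. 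But this is the easy direction: when $\dim U\geqslant{k-1\choose 2}$, Theorem~\ref{castelnuovo} already gives a conic, which is much stronger than being cut out by two quadrics, so this case was never in doubt.

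The substance of the conjecture is the borderline case $\dim U={k-1\choose 2}-1$, and there you have stopped short. Your outline --- a rectangular $\bigl({k-1\choose 2}-1\bigr)\times{k-1\choose 2}$ Cramer system, signed maximal minors spanning the kernel, a particular rational solution, and cross-products in the extra column --- is a sensible way to try to imitate the Theorem~\ref{castelnuovo} mechanism, but all three obstacles you yourself list (generic rank exactly $N-1$, a gcd cancellation dropping the degree to two, and two \emph{linearly independent} surviving quadrics rather than one) are genuine and none is resolved in your text. Point (iii) in particular is a serious gap: the square case produces a conic out of a single minor ratio, and there is no corresponding free second equation in the rectangular case. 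You flag that a Fano-type uniformity of imposed conditions may be needed first, which may well be the missing ingredient, but it remains unproved here. As written, your proposal settles the non-critical case and only sketches, without completing, the critical one; it is not a proof of the conjecture.
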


Observe that Conjecture~\ref{mainconj}, implies that that projection of $V(U)$ to $\mathrm{PG}(2,q)$ from any $k-3$ points of $V(U)$ is contained in a cubic curve. To see this suppose that $f_1$ and $f_2$ are quadratic forms defining two linearly independent quadratic forms on $\mathrm{PG}(3,q)$. Let
$$
b_i(X,Y)=f_i(X+Y)-f_i(X)-f_i(Y),
$$
the symmetric bilinear form which is the polarisation of $f_i(X)$. Then, for $x$ in the projection of $V(U)$, let
$$
g_x(X)=b_1(X,x)f_2(X)-f_1(X)b_2(X,x).
$$
Then $V(g_x)$ contains all the points of the line joining $x$ and $y$, for any $y (\neq x)$ in the projection of $V(U)$, since $g_x(\lambda x+\mu y)=0$. Since $g_x(X)$ has degree three this implies that the projection of $V(U)$ from $x$ to $\mathrm{PG}(2,q)$ is contained in a cubic curve.

\section{Another generalisation of Glynn's construction}

Let $\alpha$ be an element of ${\mathbb F}_9$ such that $\alpha^4=-1$.

The set of points of PG$(4,9)$
$$
\mathcal{A}=\{(1,x,x^2+\alpha x^6,x^3,x^4 )\ | \ x \in {\mathbb F}_9 \}\cup \{(0,0,0,0,1)\}
$$
is projectively equivalent to Glynn's arc.

In the following lemma we give a short proof of the fact that $\mathcal{A}$ is an arc, which we will then generalise.

\begin{lemma}
The set of points $\mathcal{A}$ is an arc of PG$(4,9)$.
\end{lemma}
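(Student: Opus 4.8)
The plan is to show directly that no hyperplane of $\mathrm{PG}(4,9)$ meets $\mathcal{A}$ in five points, equivalently that any five of the points of $\mathcal{A}$ are linearly independent. A hyperplane has equation $a_0X_0+a_1X_1+a_2X_2+a_3X_3+a_4X_4=0$, and evaluating at the affine point $(1,x,x^2+\alpha x^6,x^3,x^4)$ gives
\[
a_0+a_1x+a_2(x^2+\alpha x^6)+a_3x^3+a_4x^4=0,
\]
which is a polynomial identity in $x$ of degree at most $6$. So the number of affine points of $\mathcal{A}$ on a hyperplane that does not contain $e_5=(0,0,0,0,1)$ is at most the number of roots of this polynomial; the issue is that the naive bound is $6$, not $4$, because of the $\alpha x^6$ and $x^4$ terms. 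The key point will be that the polynomial $p(x)=a_0+a_1x+a_2x^2+a_3x^3+(a_4+ a_2\alpha x^2)x^4$ is, after the substitution $y=x^9=x$ is irrelevant, governed by the Frobenius: since $x^9=x$ for $x\in\mathbb{F}_9$, we have $x^6=x^{6}$ but one can instead use that $x\mapsto x^3$ is a field automorphism to rewrite $x^6=(x^2)^3$ and $x^4=x\cdot x^3=x\cdot x^{3}$, turning the equation into a \emph{linearised}-type relation in the two ``coordinates'' $x$ and $\bar x:=x^3$.

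Concretely, I would set $\bar a=a^3$ for $a\in\mathbb F_9$ (the conjugate over $\mathbb F_3$) and note $\alpha^3=-\alpha^{-1}$ using $\alpha^4=-1$, hence $\alpha x^6=\alpha(x^3)^2=\alpha\bar x^2$ and the hyperplane condition becomes
\[
a_0+a_1 x+a_2 x^2+\alpha a_2\,\bar x^2+a_3 x\bar x+a_4\, x\bar x^{?}
\]
— here I would carefully re-express $x^3=\bar x$, $x^4=x\bar x$, and $x^6=\bar x^2$, so the condition is
\[
a_0+a_1x+a_2x^2+a_3\bar x+a_4 x\bar x+\alpha a_2\,\bar x^2=0 .
\]
This is a single quadratic equation in the pair $(x,\bar x)$, i.e. $\mathcal{A}\setminus\{e_5\}$ lies on the image, under $x\mapsto(1,x,x^2+\alpha x^6,x^3,x^4)$, of a conic in the $(x,\bar x)$-plane; but $(x,\bar x)$ ranges over the graph of the Frobenius $\{(x,x^3):x\in\mathbb F_9\}$, which is itself a (twisted) conic-like curve. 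The crux is therefore a Bézout-type count: the graph $\Gamma=\{(x,x^3)\}$ and the conic $Q(x,\bar x)=0$ meet in at most a bounded number of points, and I must show this number is at most $4$ (and is exactly $4$ for some hyperplane, to conclude \emph{arc} of size $q+1=10$). Since $\Gamma$ is parametrised by $x$ and $Q$ restricted to $\Gamma$ becomes $\alpha a_2 x^6+a_2 x^2+a_4 x^4+a_3 x^3+a_1 x+a_0$, a degree-$6$ polynomial, the real content is showing that for this \emph{specific} shape — where the degree-$6$ and degree-$2$ coefficients are linked by the factor $\alpha$ and where $\alpha^4=-1$ — the polynomial can have at most $4$ roots in $\mathbb F_9$.

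The main obstacle, then, is precisely this last step: ruling out $5$ or $6$ common roots for the one-parameter family $\alpha a_2 x^6+a_4 x^4+a_3 x^3+a_2 x^2+a_1 x+a_0$. I would handle it by the following argument. If such a polynomial had $\geqslant 5$ roots in $\mathbb F_9$, then since $x^9\equiv x$ on $\mathbb F_9$ we may reduce it modulo $x^9-x$, but it already has degree $<9$; instead, a polynomial of degree $6$ with $\geqslant 5$ roots in a field has at most one more root, so it would have to factor as $\alpha a_2\,(x-r_1)\cdots(x-r_5)(x-r_6)$ with at least five $r_i\in\mathbb F_9$. Comparing the coefficient of $x^5$, which is $0$, forces $\sum r_i=0$; comparing the coefficient of $x^4$, which is $a_4/(\alpha a_2)$, and of $x$ and the constant term, I would derive that the elementary symmetric functions $e_1=0$, $e_3=-a_3/(\alpha a_2)$, and — crucially — $e_5$ and $e_6$ must satisfy $e_5/e_6\cdot(\text{stuff})$ relations coming from $\alpha^4=-1$, and I expect to reach a contradiction with $\alpha^4=-1$ (equivalently $\alpha^8=1$, $\alpha$ of order $8$, so $\alpha\notin\mathbb F_3$). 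If a clean symmetric-function contradiction proves elusive, the fallback is a finite check: the space of such polynomials up to scalar is parametrised by $(a_0:a_1:a_2:a_3:a_4)\in\mathrm{PG}(4,9)$, and for each one ask whether $\geqslant 5$ of the $9$ field elements are roots — but I would much prefer to present the conceptual Frobenius-graph argument, treating the degree-$6$ polynomial $\alpha a_2 x^6+\cdots$ as the restriction of the conic $Q$ to $\Gamma$ and invoking that two distinct conics in $\mathrm{PG}(2,\mathbb F)$ (here the conic $Q$ and the smooth conic onto which $\Gamma$ maps after a Veronese-type embedding) meet in at most $4$ points, which is exactly the bound needed and which makes the subsequent generalisation transparent.
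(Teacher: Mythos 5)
You have correctly isolated the two key facts — that $x\mapsto x^3$ is a field automorphism of $\mathbb{F}_9$ so one may replace $x^6$ by $\bar x^2$ with $\bar x = x^3$, and that $\alpha^4=-1$ must be the numerical hypothesis doing the work — and you have correctly identified the crux: show that $\alpha a_2 x^6 + a_4 x^4 + a_3 x^3 + a_2 x^2 + a_1 x + a_0$ with $a_2\ne 0$ has at most four roots in $\mathbb{F}_9$. But none of your three proposed ways to finish is a proof. The symmetric-function route is not carried out, and the constraints you name ($e_1=0$ plus the ratio $\alpha$ between the $x^6$- and $x^2$-coefficients) are not self-evidently incompatible; a contradiction would take real work to extract, if one exists along those lines at all. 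The finite check is not an argument. And the \emph{two conics} route is flawed as stated: the graph $\Gamma=\{(x,x^3)\}$ is a cubic in the $(x,\bar x)$-plane, not a conic; it does not become a conic under a Veronese-type embedding (Veronese raises degree and ambient dimension); and even if both objects were conics you would still need to rule out a common component before invoking B\'ezout.

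The step you are missing is a one-line linear combination that you in fact set up but do not take. Normalize $a_2=1$. Because $x\mapsto x^3$ is an automorphism and $x^9=x$ on $\mathbb{F}_9$, every root of $f(x)=a_0+a_1x+x^2+a_3x^3+a_4x^4+\alpha x^6$ in $\mathbb{F}_9$ is also a root of
$$
f(x)^3 \equiv a_0^3+a_3^3x+\alpha^3x^2+a_1^3x^3+a_4^3x^4+x^6 \pmod{x^9-x},
$$
and hence of
$$
f(x)-\alpha f(x)^3 = (a_0-\alpha a_0^3)+(a_1-\alpha a_3^3)x+(1-\alpha^4)x^2+(a_3-\alpha a_1^3)x^3+(a_4-\alpha a_4^3)x^4,
$$
where the $x^6$ terms cancel by design. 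Since $\alpha^4=-1$, the $x^2$-coefficient is $1-\alpha^4=2\ne 0$ in characteristic $3$, so this is a nonzero polynomial of degree at most $4$ and has at most $4$ roots. That is the whole proof. In your $(x,\bar x)$ picture this is precisely eliminating the $\bar x^2$ term from $Q(x,\bar x)$ by subtracting $\alpha$ times the Frobenius-conjugate quadratic $\bar Q(\bar x, x)$; you got to the doorstep of this combination but did not take it, which is where the gap lies.
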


\begin{proof}
The points of a hyperplane section of $\mathcal{A}$ satisfy the equation $$
f(x)=a_0+a_1x+a_2(x^2+\alpha x^6)+a_3x^3+a_4x^4=0.
$$
If $a_2=0$, then the equation obviously has at most 4 solutions.

If $a_2 \neq 0$ then we can assume that $a_2=1$. Since $x\mapsto x^3$ is an automorphism of the field, $f(x)=0$ if and only if $f(x)^3=0$. Hence we also have $$
a_0^3+a_1^3x^3+x^6+\alpha^3x^2+a_3^3x+a_4^3x^4=0.$$
A solution $x$ to $f(x)=0$ satisfies $f(x)-\alpha f(x)^3=0$. Hence,
$$a_0-\alpha a_0^3+(a_1-\alpha a_3^3)x+(1-\alpha^4)x^2+(a_3-\alpha a_1^3)x^3+(a_4-\alpha a_4^3)x^4=0.
$$
If $\alpha^4\neq 1$ then the equation  $f(x)-\alpha f(x)^3=0$ is not identically zero and has at most 4 solutions.
\end{proof}

The set of $q^t+1$ points of PG$(2^t-1,q^t)$, where the coordinates are indexed by the subsets of $\{0,1,2,\ldots,t-1\}$
$$
\mathcal{V}_t:=\{(\displaystyle \prod_{i\in T} x^{q^i})_{T\subseteq \{0,1,2,\ldots,t-1\}} \ | \ x \in \mathbb{F}_{q^t}\}\cup \{(0,0,\ldots,0,1)\},
$$
is fixed by the map
$$
(x_0,x_1,\ldots,x_{2^t-1})\mapsto (x_0,x_1,\ldots,x_{2^t-1})^q
$$
and such a map has order $t$.

\begin{theorem}
Let $I=\{q-1\} \cup \{q^d-q^{d-1}+1 \ | \ d=2,3,\ldots,t-1\}$ and suppose that $t-1$ of the coordinates of the points of PG$(2^t+t-2,q^t)$ are indexed by the elements of $I$. Let $\mathcal{A}$ be the set of points of PG$(2^t+t-2,q^t)$ such that the projection of $\mathcal{A}$ onto the subspace $x_i=0$ for $i\in I$ is $\mathcal{V}_t$ and the $i$-th coordinate is
$$
\displaystyle \sum_{j=0}^{t-1} \alpha_{ij}x^{iq^j}
$$
for $i \in I$.

If the matrix
$$
\mathrm{P}=\left( \begin{array}{cccc}
\alpha_{i0} & \alpha_{i1} & \cdots & \alpha_{i,t-1} \\
 \alpha_{i,t-1}^q & \alpha_{i0}^q & \cdots & \alpha_{i,t-2}^q \\
 \vdots & \vdots & \vdots & \vdots \\
 \alpha_{i1}^{q^{t-1}} & \alpha_{i2}^{q^{t-1}} & \cdots & \alpha_{i0}^{q^{t-1}} \\
 \end{array}
 \right)
 $$
is non-singular then $\mathcal{A}$ is a set of $q^t+1$ points of PG$(2^t+t-2,q^t)$ such that every hyperplane is incident with at most $q^{t-1}+q^{t-2}+\cdots + q+1$ points of $\mathcal{A}$.
\end{theorem}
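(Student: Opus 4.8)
\Proof
The plan is to convert the incidence count into a bound on the number of zeros of a one-variable function on $\mathbb{F}_{q^t}$, generalising the proof of the preceding lemma (which is the case $t=2$, $q=3$). Label the coordinates of $\mathrm{PG}(2^t+t-2,q^t)$ by the subsets $T\subseteq\{0,\dots,t-1\}$ (giving $y_T$) together with the elements $i\in I$ (giving $z_i$), so that the point of $\mathcal{A}$ with parameter $x\in\mathbb{F}_{q^t}$ has $y_T=x^{e(T)}$ with $e(T)=\sum_{r\in T}q^r$ and $z_i=\ell_i(x):=\sum_{j=0}^{t-1}\alpha_{ij}x^{iq^j}$, while the remaining point $Q_\infty$ has $y_{\{0,\dots,t-1\}}=1$ and all other coordinates $0$. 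These $q^t+1$ points are distinct, since $y_{\{0\}}/y_\emptyset$ recovers $x$. A hyperplane $\sum_Tc_Ty_T+\sum_{i\in I}d_iz_i=0$ contains the point with parameter $x$ if and only if $f(x)=0$, where $f:=\sum_Tc_Tx^{e(T)}+\sum_{i\in I}d_i\ell_i(x)$ is viewed as a function $\mathbb{F}_{q^t}\to\mathbb{F}_{q^t}$, and it contains $Q_\infty$ if and only if $c_{\{0,\dots,t-1\}}=0$. Writing $N:=q^{t-1}+\dots+q+1=\tfrac{q^t-1}{q-1}$, it therefore suffices to show that $f$ has at most $N$ zeros in $\mathbb{F}_{q^t}$, and at most $N-1$ zeros when $c_{\{0,\dots,t-1\}}=0$.

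The heart of the matter, and the reason for the choice of $I$, is a bookkeeping of base-$q$ digits. Reduce each power $x^a$ to the unique power of degree at most $q^t-1$ defining the same function on $\mathbb{F}_{q^t}$. One checks, for $q\ge 3$, that the $2^t+t(t-1)$ powers occurring among $x^{e(T)}$ and $x^{iq^j}$ ($i\in I$, $0\le j\le t-1$) are pairwise distinct after this reduction — some base-$q$ digit of the reduction of $iq^j$ always equals $q-1$, so it can never equal an $e(T)$ — and that among all of them the only ones of degree $\ge N$ are $x^{e(\{0,\dots,t-1\})}=x^{N}$, of degree exactly $N$, together with, for each $i\in I$, exactly one ``bad'' power $x^{E_i}$ of degree strictly greater than $N$: explicitly $E_{q-1}=q^t-q^{t-1}$ and $E_{q^d-q^{d-1}+1}=q^t-q^{t-1}+q^{t-d}$ for $2\le d\le t-1$, and the $E_i$ are pairwise distinct. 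Applying the Frobenius automorphism, each $f^{q^s}$ again has the shape $\sum_Tc_T'x^{e(T)}+\sum_{i\in I}d_i^{q^s}\ell_i^{q^s}$ in which the powers $x^{e(T)}$ are cyclically permuted and, for fixed $i$, $\ell_i^{q^s}=\sum_{j}(\mathrm{P}_i)_{s,j}\,x^{iq^j}$ uses precisely the monomials of $\ell_i$, permuted according to the matrix $\mathrm{P}_i$. In particular the coefficient of $x^{N}$ in $\sum_{s=0}^{t-1}\lambda_sf^{q^s}$ is $\sum_s\lambda_sc_{\{0,\dots,t-1\}}^{q^s}$, and for each $i\in I$ the coefficient of $x^{iq^{j}}$ there equals $\sum_s\lambda_sd_i^{q^s}(\mathrm{P}_i)_{s,j}$.

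Now $f(x)=0$ forces $f(x)^{q^s}=0$, so $h:=\sum_{s=0}^{t-1}\lambda_sf^{q^s}$ vanishes wherever $f$ does, for any $\lambda=(\lambda_0,\dots,\lambda_{t-1})$. Impose the $t-1$ linear conditions that the coefficient of each $x^{E_i}$ in $h$ be zero; this homogeneous system in $\lambda$ has a nonzero solution, which I fix (when all $d_i=0$ there are no conditions, and I take $\lambda=(1,0,\dots,0)$, so $h=f$). By the previous paragraph the reduced form of $h$ then has degree at most $N$, and at most $N-1$ if $c_{\{0,\dots,t-1\}}=0$. It remains to check $h\not\equiv 0$. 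If all $d_i=0$ then $h=f$ is a nonzero combination of the distinct powers $x^{e(T)}$, hence nonzero. Otherwise fix $i\in I$ with $d_i\ne 0$; if $h\equiv 0$, then reading off the coefficients of $x^{iq^0},\dots,x^{iq^{t-1}}$ in $h$ — each of which, by the distinctness above, receives a contribution only from the $\ell_i$-parts of the $f^{q^s}$ — gives $(\lambda_0d_i,\lambda_1d_i^{q},\dots,\lambda_{t-1}d_i^{q^{t-1}})\,\mathrm{P}_i=0$, so non-singularity of $\mathrm{P}_i$ forces $\lambda_sd_i^{q^s}=0$ for all $s$, i.e.\ $\lambda=0$, contrary to the choice of $\lambda$. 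Thus $h$ is a nonzero polynomial of degree at most $N$ (at most $N-1$ when $Q_\infty$ lies on the hyperplane), so $f$ has at most $N$ zeros (at most $N-1$ when $Q_\infty$ lies on the hyperplane), and counting $Q_\infty$ in the latter case still gives at most $N$ points of $\mathcal{A}$ on the hyperplane. (For $q=2$ each $x^{iq^j}$ reduces to some $x^{e(T)}$, so the $z_i$ are linear combinations of the $y_T$ on $\mathcal{A}$ and the construction is degenerate; we assume $q\ge 3$.)

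The main obstacle is the digit computation of the second paragraph: proving that, for the exponents $q-1$ and $q^d-q^{d-1}+1$ specifically, each $\ell_i$ contributes exactly one monomial of degree exceeding $N$, that these top monomials are distinct and lie strictly above everything else, and that no $\ell_i$-monomial coincides with any $e(T)$-monomial. Once this is in hand, the Frobenius elimination together with the non-singularity of the $\mathrm{P}_i$ closes the argument, exactly as in the lemma over $\mathbb{F}_9$, where $t=2$, $q=3$, $\alpha_{20}=1$, $\alpha_{21}=\alpha$ and $\det\mathrm{P}=1-\alpha^4$.
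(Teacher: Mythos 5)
Your proof is correct and follows the paper's argument essentially line for line: reduce to bounding the zeros of the linearised section $l(x)$, take a Frobenius combination $\sum_s\lambda_s\,l(x)^{q^s}$ that kills the at most $t-1$ monomials of degree exceeding $N=q^{t-1}+\cdots+q+1$, and invoke the non-singularity of the matrices $\mathrm{P}_i$ to conclude this combination is not identically zero, hence has at most $N$ roots. You additionally spell out two points the paper leaves implicit — the treatment of the extra point $(0,\ldots,0,1)$, where $c_{\{0,\ldots,t-1\}}=0$ forces the combination to have degree at most $N-1$, and the degeneracy at $q=2$ where $x^{(q-1)q^j}$ collides with $x^{e(\{j\})}$ — which are genuine clarifications rather than a different route.
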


\begin{proof}
 In $\mathbb{Z}_{q^t-1}$ consider the bijection $\tau:i\mapsto iq$. The map $\tau$ has order $t$ and the elements of $I$ have distinct orbits.

The only element in the orbit of $q-1$ larger than
 $$
 q^{t-1}+q^{t-2}+\cdots + q+1
 $$
 is $q^t-q^{t-1}=q^{t-1}(q-1)$ and this occurs only when $q>2$.

 In the orbit of $q^d-q^{d-1}+1$,
 $$
 q^e(q^d-q^{d-1}+1) < q^{t-1}+q^{t-2}+\cdots + q+1
 $$
 for $e \leqslant t-1-d$
 and for $e \in \{t-d+1,t-d+2,\ldots,t-1\}$,
 $$
 q^{e+d}-q^{e+d-1}+q^e \equiv q^{e+d-t}-q^{e+d-1-t}+q^e \pmod {q^{t}-1}.
 $$

 Hence,
 $$q^{e+d-t}-q^{e+d-1-t}+q^e < q^{t-1}+q^{t-2}+\cdots + q+1.$$

 The only element in the orbit of $q^d-q^{d-1}+1$ which is larger than $q^{t-1}+q^{t-2}+\cdots + q+1$ is $q^{t-d}(q^d-q^{d-1}+1)$.

 A linear section of $\mathcal{A}$ is a linear combination $l(x)$ of the monomials
 $$
 M:=\{\prod_{i\in T} x^{q^i} \ | \ T\subseteq \{0,1,2,\ldots,t-1\} \} \cup \{ x^{iq^j} \ | \  i \in I, j \in \{0,1,2,\ldots,t-1\} \}
 $$
 which is equal to zero.

 Also $l(x)^{q^j}=0$, for $j=1,2,\ldots,t-1$ which are linear combinations of the monomials of $M$ too.

 Since the number of elements in $M$ of degree larger than
 $$
 q^{t-1}+q^{t-2}+\cdots + q+1
 $$
is at most $t-1$, there is a suitable linear combination $f(x)$ of
$$
l(x),l(x)^q,\ldots,l(x)^{q^{t-1}},
$$
that does not contain those monomials.

Hence, $f(x)$ is a non-zero polynomial of degree at most
 $$
 q^{t-1}+q^{t-2}+\cdots + q+1,
 $$
or $f(x)$ is identically zero. The latter is only possible if
 $$
 \{l(x),l(x)^q,\ldots,l(x)^{q^{t-1}}\}
 $$
 are linearly dependent as subset of the vector space generated by the monomials of
 $M$ over  ${\mathbb{F}_{q^t}}$.

 If in $l(x)$ the coefficient of $\displaystyle \sum_{j=0}^{t-1} \alpha_{ij}x^{iq^j}$ is zero for all $i \in I$ then $l(x)$ has degree at most $q^{t-1}+q^{t-2}+\cdots + q+1$.

  If there exist an $i \in I$ such that the coefficient of
  $$
\sum_{j=0}^{t-1} \alpha_{ij}x^{iq^j} \neq 0
 $$
 then, by the non-singularity of $\mathrm{P}$, it follows that
 $$
 \{l(x),l(x)^q,\ldots,l(x)^{q^{t-1}}\}
 $$
 are linearly independent. Hence, $\mathcal{A}$ is a set of $q^t+1$ points of PG$(2^t+t-2,q^t)$ such that every hyperplane is incident with at most  $q^{t-1}+q^{t-2}+\cdots + q+1$ points of $\mathcal{A}$.
  \end{proof}

To calculate the space of quadratic forms vanishing on $\mathcal{A}$, observe that we have labeled $2^t$ of the  coordinates of PG$(2^t+t-2,q^t)$ by the subsets of $\{0,1,\ldots t-1\}$. Let $T_i,T_j,T_l,T_m$ be subsets of $\{0,1,\ldots t-1\}$ such that $T_i\cap T_j=T_l\cap T_m=\emptyset$ and $T_i \cup T_j=T_l\cup T_m$. All the quadratic forms of type
$$
x_{T_i}x_{T_j}=x_{T_l}x_{T_m}
$$
vanish on $\mathcal{A}$.

If $q=3$ then there are more quadratic forms vanishing on $\mathcal{A}$. We have  $$I=\{2 \} \cup \{2\cdot 3^{d-1}+1 \ | \ d=2,3,\ldots,t-1\}.
$$
Let $x_i$ be the coordinate occupied by $$
p_i(x)=\displaystyle \sum_{j=0}^{t-1} \alpha_{ij}x^{iq^j},
$$
for some $i \in I$. Then the following quadratic forms vanishing on $\mathcal{A}$,
 $$
x_2x_{T}=\displaystyle \sum_{j=0}^{t-1} \alpha_{2j}x_{T_{jl}}x_{T_{jm}},
$$
where $T,T_{jl},T_{jm} \subset \{0,1,\ldots t-1\}$ such that $T_{jl}\cup T_{jm}=T\cup\{j\}$,
and for $d \in \{2,3,\ldots,t-1\}$,

 $$
  x_{e}x_T=\displaystyle \sum_{j=0}^{t-1} \alpha_{ej}x_{T_{jl}}x_{T_{jm}},
  $$
where $e=2\cdot 3^{d-1}+1$ and $T,T_{jl},T_{jm} \subset \{0,1,\ldots t-1\}$ such that
$$
T_{jl}\cup T_{jm}=T\cup\{j,j+d-1,j+d-1\}$$
if $d-i+j \notin T$,
  and
   $$T_{jl}\cup T_{jm}=T\setminus\{d-i+j\} \cup\{j,j+d\}$$
if $d-i+j \in T$.





\section{Conclusions}

Let $X$ be a set of points of $\mathrm{PG}(k-1,q)$ which are not contained in the union of two hyperplanes and which impose $2k$ conditions on the space of quadratic forms. Theorem~\ref{mainthm} tells us that $X$ is either an arc, a track or contains a line. Furthermore, if there is some symmetry, Theorem~\ref{groupthm} implies that the latter case does not occur. This enables us to find many examples of arcs and tracks as the set of common zeros $V(U)$ of the subspace $U$ of quadratic forms which are zero on $X$. In all but one case, these examples are either a normal rational curve or have the property that they project onto the intersection of two quadrics in $\mathrm{PG}(3,q)$. The exceptional case is the Glynn track. This leads us to Conjecture~\ref{mainconj}. We could go further and conjecture that if the hypothesis of Conjecture~\ref{mainconj} holds and $V(U)$ is an arc then $V(U)$ is a normal rational curve.

\section{Acknowledgements}

The authors would like to thank Massimo Giulietti, Michel Lavrauw and Aart Blokhuis for valuable discussions and the referees for their comments and suggestions. The second author would like to thank the Universitat Polit\'ecnica de Catalunya for its hospitality during her stay in September 2018 when the work contained in this article was initiated.


\begin{thebibliography}{20}

\bibitem{Ball2012} S. Ball, On sets of vectors of a finite vector space in which every subset of basis size is a basis, {\em J. Eur. Math. Soc.}, {\bf 14} (2012) 733--748.

\bibitem{Ball2016} S. Ball, On arcs and quadrics, {\em Proceedings  of  the International Workshop on the Arithmetic of Finite Fields WAIFI}, Lecture Notes in Computer Science, Springer, 10064 (2017) 95--102.

\bibitem{BJ2018} S. Ball and E. Jimenez, On sets defining few ordinary solids, {\tt arXiv:1808.06388}.

\bibitem{Castelnuovo1889} G. Castelnuovo, Ricerche di geometria sulle curve algebriche, {\em Atti R. Accad. Sci. Torino}, {\bf 24} (1889) 196--223.


\bibitem{deBoer1997} M. A. de Boer, {\em
Codes: their parameters and geometry}, Dissertation, Technische Universiteit Eindhoven, Eindhoven, 1997.

\bibitem{Fano1894} G. Fano, Sopra le curve di dato ordine e dei massimi generi in uno spazio qualunque, {\it Mem. Accad. Sci. Torino}, {\bf 44} (1894) 335--382.



\bibitem{Giulietti2004} M. Giulietti, On the extendibility of near-MDS elliptic codes, {\em
Appl. Algebra Engrg. Comm. Comput.}, {\bf 15} (2004) 1--11.

\bibitem{Glynn1986}
{D. G. Glynn}, The non-classical 10-arc of $\mathrm{PG}(4,9)$,
{\it Discrete Math.}, {\bf 59} (1986) 43--51.

\bibitem{Glynn1994}
{D. G. Glynn}, On the construction of arcs using quadrics, {\em Austral. J. Combin.}, {\bf 9} (1994) 3--19.

\bibitem{Ingleton1956} A. W. Ingleton, The rank of circulant matrices, {\it J. London Math. Soc.}, {\bf 4} (1956) 445--460.

  \bibitem{GAP4}
  The GAP~Group, \emph{GAP -- Groups, Algorithms, and Programming,
  Version 4.10.1};
  2019,
  \verb+(https://www.gap-system.org)+.

\end{thebibliography}
\end{document}